   \def\MR#1{}
\newcommand{\join}{\otimes}
\newcommand{\submat}[1]{ \widetilde { B_{#1} } }
\newcommand{\bigmat}[1]{ \widetilde { B_{#1}^+ } }
\newcommand{\typei}{type-i}
\newcommand{\typeii}{type-ii}
\newcommand{\scwalgo}{\tt diagonalize}
\newcommand{\M}[1]{M_{#1}}
\DeclareMathOperator*{\id}{\mathit{id}}
\DeclareMathOperator*{\cwop}{\mathit{cw}}
\DeclareMathOperator*{\scwop}{\mathit{scw}}
\DeclareMathOperator*{\poly}{\mathit{poly}}
\newcommand{\tw}{tree-width}
\newcommand{\cw}{clique-width}
\newcommand{\nlc}{NLC-width}
\newcommand{\scw}{slick clique-width}
\newcommand{\kex}{$k$-expression}
\newcommand{\kexs}{$k$-expressions}
\newcommand{\skex}{slick $k$-expression}
\newcommand{\skexs}{slick $k$-expressions}
\newcommand{\nlckexs}{NLC $k$-expressions}
\newcommand{\laa}[1]{\mathlarger {\mathlarger #1}}
\newcommand{\et}[2]{\eta_{#1,#2}}
\newcommand{\rh}[2]{\rho_{#1 \rightarrow #2}}
\newtheorem{theorem}{Theorem}
\newtheorem{lemma}{Lemma}
\newtheorem{corollary}{Corollary}
\newtheorem{remark}{Remark}
\newtheorem{example}{Construction}
\begin{document}
\pagestyle{myheadings}

\title{Eigenvalue location in graphs of small clique-width}
\keywords{eigenvalues, clique-width, congruent matrices, efficient algorithms.}
\subjclass{05C50, 05C85, 15A18}
\author{Martin F\"{u}rer}
\address{Dept. of Computer Science and Engineering, Pennsylvania State University}
\email{\tt furer@cse.psu.edu}
\author{Carlos Hoppen}
\address{Instituto de Matem\'atica, Universidade Federal do Rio Grande do Sul}
\email{\tt choppen@ufrgs.br}
\author{David P. Jacobs}
\address{ School of Computing, Clemson University}
\email{\tt dpj@clemson.edu}
\author{Vilmar Trevisan}
\address{Instituto de Matem\'atica, Universidade Federal do Rio Grande do Sul}
\email{\tt trevisan@mat.ufrgs.br}
\pdfpagewidth 8.5 in
\pdfpageheight 11 in

\begin{abstract}
Finding a diagonal matrix congruent to $A - cI$ for constants $c$, where $A$
is the adjacency matrix of a graph $G$ allows us to quickly tell
the number of eigenvalues in a given interval. If $G$ has clique-width $k$ and a corresponding $k$-expression is known,
then diagonalization can be done in time $O(\poly(k) n)$ where $n$ is the order of $G$.
\end{abstract}

\thanks{Research supported in part by NSF Grant CCF-1320814.}
\thanks{Work supported by Science without Borders CNPq - Grant 400122/2014-6, Brazil}

\maketitle

\noindent {\bf Keywords:} adjacency matrix, eigenvalue, small clique-width.

\section{Introduction}
\label{sec:intro}
Throughout this paper we use standard terminology for graph theory and linear algebra.
The main concern of spectral graph theory is to determine properties of a graph through the eigenvalues of matrices associated with it. Even if we restrict ourselves to the adjacency and the Laplacian matrices, eigenvalues and eigenvectors have been particularly useful for isomorphism testing and embedding graphs in the plane~\cite{Babai:1982}, for graph partitioning and clustering~\cite{Luxburg:2007}, in the study of random walks on graphs~\cite{Chung:1997,Lovasz1996} and in the geometric description of data sets~\cite{Coifman20065}, just to mention a few examples. An obvious step in any such application is to calculate the spectrum of the input graph, or at least to accurately estimate a subset of its eigenvalues. In fact, the distribution of eigenvalues of graphs in a given class of graphs generated by a given random graph model has been studied intensively (see~\cite{dumitriu2012,JiangDing2010,MCKAY1981203} and the references therein).

We say that an algorithm \emph{locates eigenvalues for a class $\mathcal{C}$} if, for any graph $G \in \mathcal{C}$ and any real interval $I$, it finds the number of eigenvalues of $G$ in the interval $I$.
In recent years, efficient algorithms have been developed for the location of eigenvalues
in trees \cite{JT2011},
threshold graphs \cite{JTT2013}
(also called {\em nested split graphs}),
and chain graphs \cite{MR3506491}.
A rich class of graphs which contain threshold graphs are the graphs with no induced subgraph isomorphic to $P_4$, which are often called {\em $P_4$-free} graphs or {\em cographs}.
Eigenvalue location in cographs and threshold graphs has been widely
studied \cite{Royle2003,Sander2008,Stanic2009,BSS2011,JacobsTT2015,MohammadianT2016}.

It turns out that there is a strong connection between eigenvalue location and congruence of matrices, which we now describe.
Two matrices $R$ and $S$ are {\em congruent},
which we write $R \cong S$,
if there exists a nonsingular matrix $P$ for which $R = P^T S P$.
Let $G$ be a graph with adjacency matrix $A$, and
consider real numbers $c<d$.
If we can construct a diagonal matrix $D_c \cong B = A - cI$, then Sylvester's Law of Inertia \cite[p. 568]{Meyer2000} implies that the number $n_1$ of eigenvalues
of $A$ greater than $c$
equals the number positive entries in $D_c$. (Similarly, the number of eigenvalues equal to $c$, or less than $c$, are given by the number of zero diagonal entries, or by the number of negative entries in $D_c$, respectively.)
Hence, the number $n_2$ of positive entries in a diagonal matrix $D_d \cong A - dI$ is the number of eigenvalues of $A$ greater than $d$.
Thus $n_1 - n_2$ is the number of eigenvalues in $(c,d]$. This is why we want to design a fast algorithm to find a diagonal matrix that is congruent to $A - cI$.

Tura and two of the current authors~\cite{JacobsTT2015} designed such a diagonalization algorithm for cographs. This algorithm runs in linear time and performs congruence operations on the matrix $A-cI$ using the \emph{cotree} representation of $G$ (see~\cite{Corneil1981} for more information about cographs and the terminology associated with them). The algorithm works bottom-up on the cotree, and at each stage it diagonalizes the rows and columns associated with either one or two vertices (so-called \emph{siblings}), and then removes the corresponding leaves from the tree. Here, we generalize this approach to arbitrary graphs, using a parse tree representation that is closely connected to the hierarchical decomposition of graphs known as \emph{clique-width}. In spite of this similarity, the new algorithm requires several new ingredients. Indeed, unlike the cograph algorithm, the new algorithm does not diagonalize a given number of vertices at each stage, and needs to pass information up the tree (in a very compact way).

Clique-width is a powerful concept which was introduced in 2000 by Courcelle and Olariu \cite{CourcelleO2000}, and turns out to be interesting for algorithmic purposes. Its main motivation was to extend the well-known concept of tree-width due to Robertson and Seymour~\cite{RobertsonS86} to denser graphs. In general, graph widths have been used to design algorithms for NP-complete or even harder problems that are efficient on graphs of bounded width. (Interested readers are referred to~\cite{Bodlaender:2008,Niedermeier2006}, and the references therein. See also~\cite{Corneil:2005} for relations between tree-width and clique-width.)

A \emph{$k$-expression} is an expression formed from atoms  $i(v)$, two unary operations $\et{i}{j}$ and $\rh{i}{j}$, and a binary operation $\oplus$ as follows.
\begin{itemize}
\item $i(v)$ creates a vertex $v$ with label $i$, where $i$ is  from the set $[k]=\{1, \dots , k\}$.
\item $\et{i}{j}$ creates edges (not already present) between every vertex with label $i$ and
every vertex with label $j$ for $i \neq j$.
\item $\rh{i}{j}$ changes all labels $i$ to $j$.
\item $\oplus$ produces the disjoint union of two labeled graphs.
\end{itemize}
Finally, the graph generated by a $k$-expression is obtained by deleting the labels.
The \emph{clique-width} $\cwop(G)$ of a graph $G$ is the smallest $k$ such that the graph can
be defined by a $k$-expression \cite{CourcelleER93,CourcelleO2000}.

Any graph can be constructed in this way, provided that $k$ is large enough. For instance, cographs are exactly the graphs for which $\cwop(G) \leq 2$, and
one can show that
$\cwop(T) \leq 3$
for any tree $T$.
See \cite{KaminskiLM2009} for a discussion of the \cw\ of many classical classes of graphs.
Computing the clique-width is NP-hard \cite{FellowsRRS2006}. Thus, one usually assumes that a graph is given together with a $k$-expression.

The purpose of this paper is to give an $O(\poly(k) n)$ time diagonalization algorithm for
graphs having \cw\ $k$. Note that the adjacency matrices of graphs with \cw\ $k$ often have $\Omega(n^2)$ nonzero entries, and that the \cw\ may be a small constant even if other parameters, such as the \tw, are linear in~$n$.
While there is a strong connection between tree-width and Gaussian elimination, the main
application area for graph widths has been the design of efficient algorithms
for NP-complete or even harder problems. The goal there is to find problems fixed parameter tractable (FPT), by providing an algorithm with a
running time of $O(f(k) n^c)$, for a constant $c$ and an arbitrary computable function $f$. Typically, $f$ is at least exponential, but, for small values of $k$, such algorithms are often very practical.

Here, we return to a polynomial time solvable problem. Nevertheless, the parameterized complexity
view is very useful. For bounded \cw, we turn a cubic time solution into a linear time solution,
a drastic improvement for graphs of small \cw. Despite being inspired by the cograph approach of~\cite{JacobsTT2015}, the extension is not at all straightforward. In fact, a property that is particularly crucial to us is that subgraphs generated by subexpressions are induced subgraphs, which does not hold for $k$-expressions, where an edge creating operation applied after a join
of $G$ and $H$ typically introduces edges within $G$ and within $H$. To deal with this, we introduce a new graph decomposition that may be easily obtained from a $k$-expression (and translated back into a $k$-expression).

Our approach is reminiscent of a parameter closely related to {\cw}, the lesser known {\em \nlc}, due to Wanke \cite{Wanke94},
and initiated by node label controlled (NLC)
graph grammars \cite{JanssensR80a,JanssensR80b}.
Graphs of \nlc\ at most $k$ are defined by \nlckexs.
These expressions
contain the operators
$i(v)$ and
$\rh{i}{j}$
for vertex creation and relabeling.
But new edges are created in {\em combination} with the join operation,
using a binary operation $\oplus_S$, where $S \subseteq [k] \times [k]$.
When $G \oplus_S H$ is applied,
then, for each $(i,j) \in S$, edges are introduced between
vertices labeled $i$ in $G$ and vertices labeled $j$ in $H$.
This has the effect that a subgraph generated by a subexpression is always an induced subgraph,
a property important to us.

In representing graphs, we will actually use a minor but simplifying modification of \nlc, which we call {\em \scw} and is much more convenient than clique-width for our purposes.
Here a single operator performs the join, edge creation and relabelling, resulting in an expression whose parse tree is the simplest among graph representations.
It also has the property that subgraphs generated by subexpressions are induced subgraphs.

The remainder of this paper is organized as follows.
In Section~\ref{sec:slickcw}
we define {\skexs} and {\scw}, denoted $\scwop(G)$.
We also prove that
$\scwop(G) \leq  \cwop(G) \leq 2 \scwop(G)$
and observe that there are linear-time transformations
to translate a {\kex} to a {\skex},
and a {\skex} to a $2k$-expression.
In Section~\ref{sec:algorithm} we describe
our $O(\poly(k) n)$ time diagonalization algorithm for graphs of {\scw} $k$, and in Section~\ref{sec:example} illustrate the algorithm by computing the inertia of a certain graph of order seven.
Some applications of this algorithm can be found in Section~\ref{sec:appl}, while
concluding remarks appear in Section~\ref{sec:conclusion}.

\section{Slick Clique-Width}
\label{sec:slickcw}

In the following definition a single operator is used for performing the union, creating edges
and relabeling.
\begin{samepage}
A \emph{\skex} is an expression formed from atoms $i(v)$ and a binary operation $\oplus_{S,L,R}$, where $L$, $R$ are functions from $[k]$ to $[k]$ and $S$ is a binary relation on $[k]$, as follows.
\begin{itemize}
\item[(a)] $i(v)$ creates a vertex $v$ with label $i$, where $i \in [k]$.
\item[(b)] Given two graphs $G$ and $H$ whose vertices have labels in $[k]$, the labeled graph $G \oplus_{S,L,R} H$ is obtained by the following operations.
Starting with the disjoint union of $G$ and $H$, add edges from every vertex labeled
$i$ in $G$ to every vertex labeled $j$ in $H$ for all $(i,j) \in S$.
Afterwards, every label $i$ of the \emph{left component} $G$ is replaced by $L(i)$, and every label $i$ of the \emph{right
component} $H$ is replaced by $R(i)$.
\end{itemize}
Two {\scw} expressions are said to be \emph{equivalent} if they produce the same labeled graph. Finally, the \emph{graph generated by a {\scw} expression} is obtained by deleting the labels of the labeled graph produced by it.
\end{samepage}
The \emph{\scw}
$\scwop(G)$
of a graph $G$
is the smallest $k$ such that the graph can be defined by a \skex.

Note that cographs are precisely the graphs with {\scw} equal to one. Indeed, recall that $G$ is a cograph if and only if either $G$ is a single vertex
or the union $G_1 \cup G_2$ or join $G_1 \join G_2$ of cographs $G_1$ and $G_2$ (see \cite{BSS2011}). On the other hand, when there is a single label available, the function $L$ and $R$ are trivial identities, so that $\oplus_{S,L,R}$ either creates a disjoint union (if $S = \emptyset$) or adds all possible edges with ends in the two operands (if $S=\{(1,1)\}$). The graph $G$ of Figure \ref{fig:int1} may be constructed with the following slick $2$-expression:
\begin{multline*}
  \left[1(f) \oplus_{\{(1,2)\},id,id}  2(g)\right]\oplus_{\{(1,2)\},\{2\rightarrow 1\},\{2\rightarrow 1\}} \\
  \left[\left\{1(e)\right\}  \oplus_{\{(1,2)\},id,\{ 1\rightarrow  2\}}   \left\{\left((1(c) \oplus_{\{(1,2)\},id,id} 2(d))\right)  \oplus_{\{(2,2)\},id,id} \left((1(a) \oplus_{\{(1,2)\},id,id}  2(b))\right)\right\}\right] \\
\end{multline*}
or, representing the triples that define each operation by letters,
$$
  \left[1(f) ~\textbf{B}~  2(g)\right]~ \textbf{A} ~
  \left[\left\{1(e)\right\} ~ \textbf{C} ~ \left\{\left((1(c)  ~\textbf{E}~ 2(d))\right) ~\textbf{D}~ \left((1(a) ~\textbf{F}~  2(b))\right)\right\}\right].
$$
Since $G$ is not a cograph (it contains an induced copy of $P_4$), we know that $\scwop(G)=2$.

\begin{figure}[h!]\begin{center}
\begin{tikzpicture}
  [scale=1.5,auto=left,every node/.style={circle,scale=.6}]
  \node[draw,circle] (1) at (0,0) {$a$};
  \node[draw,circle] (2) at (1,0) {$b$};
  \node[draw,circle] (3) at (1.5,1) {$e$};
  \node[draw,circle] (5) at (2,0) {$d$};
  \node[draw,circle] (6) at (3,0) {$c$};
  \node[draw,circle] (7) at (1.5,-1) {$g$};
  \node[draw,circle] (8) at (2.5,-1) {$f$};
  \foreach \from/\to in {1/2,2/3,2/5,3/5,5/6,7/8,7/1,7/2,7/5,7/6} {
    \draw (\from) -- (\to);}
\end{tikzpicture}
     \caption{Graph with $k=2$.}
     \label{fig:int1}
\end{center}\end{figure}
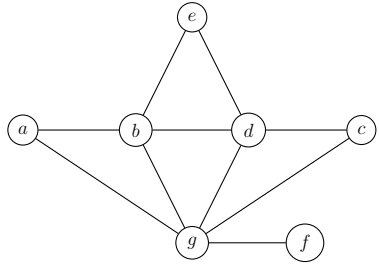

Note that the definition of a \skex{} implies that edges can only be placed between different components,
so if two vertices are in the same component after some steps in the above construction, but are not adjacent,
they will never become adjacent.
As shown in Figure~\ref{fig:parsetree},
a {\skex} can also be represented as a parse tree $T$
where the leaves contain the operators $i(v)$
and the internal nodes contain the $\oplus_{S,L,R}$ operations.
Two vertices $v$ and $w$ are adjacent if and only if
their least common ancestor $\oplus_{S,L,R}$ in $T$ connects them,
similar to the cotree representation for cographs in \cite{BSS2011}.
\begin{figure}[h!]
\begin{center}
\begin{tikzpicture}
  [scale=1,auto=left,every node/.style={circle,scale=0.9}]
  \node[draw, circle, fill=blue!10, inner sep=0,label=below:A] (A) at  (0,0) {$\oplus$};
  \node[draw, circle, fill=blue!10, inner sep=0,label=below:B] (B) at  (-1.5,-1) {$\oplus$};
  \node[draw, circle, fill=blue!10, inner sep=0,label=below:C] (C) at  (1.5,-1) {$\oplus$};
  \node[draw,circle,fill=black,label=below:$1(f)$,scale=.6] (1) at (-2,-2) {};
  \node[draw,circle,fill=black,label=below:$2(g)$,scale=.6] (2) at (-1,-2) {};
  \node[draw,circle,fill=black,label=below:$1(e)$,scale=.6] (3) at (1,-2) {};
  \node[draw, circle, fill=blue!10, inner sep=0,label=below:D] (D) at  (3,-2) {$\oplus$};
  \node[draw, circle, fill=blue!10, inner sep=0,label=below:E] (E) at  (1.5,-3) {$\oplus$};
  \node[draw, circle, fill=blue!10, inner sep=0,label=below:F] (F) at  (4.5,-3) {$\oplus$};
  \node[draw,circle,fill=black,label=below:$1(c)$,scale=.6] (4) at (1,-4) {};
  \node[draw,circle,fill=black,label=below:$2(d)$,scale=.6] (5) at (2,-4) {};
  \node[draw,circle,fill=black,label=below:$1(a)$,scale=.6] (6) at (4,-4) {};
  \node[draw,circle,fill=black,label=below:$2(b)$,scale=.6] (7) at (5,-4) {};
 \foreach \from/\to in {A/B,A/C,B/1,B/2,C/3,C/D,D/E,D/F,E/4,E/5,F/6,F/7} {
    \draw (\from) -- (\to);}
\end{tikzpicture}
       \caption{A parse tree.}
       \label{fig:parsetree}
\end{center}
\end{figure}
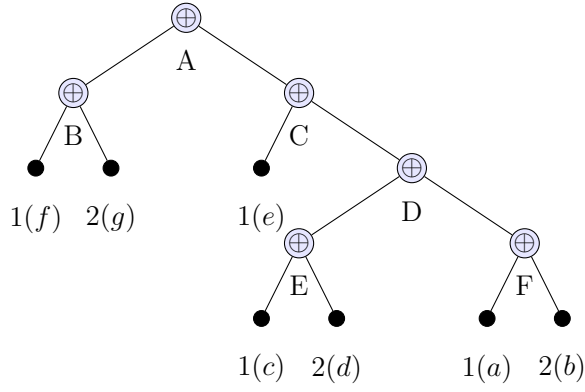

We can define the {\em depth} $d(r)$ of a {\skex } $r$ recursively.
The expression $i(v)$ has depth $0$,
and $d( A \oplus_{S,L,R} B) = 1 + \max \{ d(A), d(B) \} $.
This is equivalent to the depth of the parse tree for $r$.
In a similar way, we can define the depth of a {\kex}.

The next result shows that the concepts of {\cw} and {\scw} are closely related.
\begin{theorem}
\label{thr:scwcwratio}
If $G$ is a graph then  $\scwop(G) \leq  \cwop(G) \leq 2 \scwop(G)$.
\end{theorem}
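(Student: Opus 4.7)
I would prove the two inequalities separately by structural induction on parse trees.

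For $\scwop(G) \leq \cwop(G)$, the plan is to convert a $k$-expression $E$ into an equivalent slick $k$-expression using the same label set $[k]$. Atoms translate directly, disjoint unions become $\oplus_{\emptyset,\id,\id}$, and a relabeling $\rh{i}{j}(F)$ is absorbed by composing it with the functions $L$ and $R$ at the root of the slick translation $F'$ (or by modifying the leaf label if $F'$ is an atom). The main obstacle is the edge operation $\et{i}{j}(F)$: in a $k$-expression this may introduce edges inside one component of $F$, whereas the slick operator only creates edges between its two components. Writing $F' = A' \oplus_{S,L,R} B'$, I would augment $S$ with the pairs in $\bigl(L^{-1}(i) \times R^{-1}(j)\bigr) \cup \bigl(L^{-1}(j) \times R^{-1}(i)\bigr)$ to cover the inter-component edges, and recursively push operations $\et{a}{a'}$ with $(a,a') \in L^{-1}(i) \times L^{-1}(j)$ into $A'$ (and the analogous ones into $B'$) to cover the intra-component edges. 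Since every pushed operation uses labels in $[k]$, the label budget is preserved throughout the recursion.

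For $\cwop(G) \leq 2\scwop(G)$, the plan is to simulate a slick $k$-expression by a $2k$-expression. Assuming inductively that $2k$-expressions $A'$ and $B'$ have been built for the operands of a slick step $A \oplus_{S,L,R} B$, each producing a labeled graph with labels in $[k] \subset [2k]$, I would simulate the step in four stages: (1) append $\rh{i}{i+k}$ for $i=1,\dots,k$ to $B'$ so that $B$'s labels move to the ``high'' range $\{k+1,\dots,2k\}$; (2) take the disjoint union of $A'$ with the shifted $B'$; (3) apply $\et{i}{j+k}$ for every $(i,j) \in S$, which creates exactly the required $A$-to-$B$ edges since the two components now occupy disjoint label ranges; and (4) realize the combined relabeling $M\colon [2k] \to [k]$ defined by $M(i) = L(i)$ for $i \leq k$ and $M(i+k) = R(i)$ through a sequence of $\rh{}{}$ operations.

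The main obstacle is stage (4), which amounts to implementing an arbitrary function $M\colon [2k] \to [k]$ using $\rh{}{}$ operations on $[2k]$. A naive ordering fails because once $\rh{s}{M(s)}$ has been applied, a subsequent $\rh{s'}{s}$ with $M(s')=s$ can deposit into label $s$ new vertices that any further $\rh{s}{\cdot}$ operation would then move erroneously. I would exploit the fact that $M$'s image lies in $[k]$ while the high labels $\{k+1,\dots,2k\}$ initially hold only $B$-vertices and become empty as these are relocated. Concretely, when an $A$-vertex at label $i$ must be sent to $L(i) \neq i$ but label $L(i)$ still hosts $A$-vertices destined elsewhere, it is first parked on a high label whose $R$-target equals $L(i)$, so that coexistence is safe; any non-trivial cycles of $L$ that cannot be resolved by such piggybacking are handled by using a now-empty high label as a scratch slot, followed by a final pass flushing all high labels down to $[k]$. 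The resulting $2k$-expression yields the same labeled graph as the slick step, completing the induction.
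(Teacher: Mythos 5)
Your proof of $\scwop(G)\le\cwop(G)$ is essentially the paper's: the pairs you add to $S$ and the operations $\et{a}{a'}$ you push recursively into the two operands are exactly the sets $S'$, $S_{LL}$ and $S_{RR}$ of Lemma~\ref{lem:eta-slick}, and your handling of $\rh{i}{j}$ and $\oplus$ matches Lemma~\ref{lem:scwop-cwop}. Stages (1)--(3) of your simulation for $\cwop(G)\le 2\scwop(G)$ also coincide with the proof of Lemma~\ref{lem:cwop-twoscwop} (the paper shifts the left operand into the high labels rather than the right, an immaterial difference).

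The gap is in your stage (4), which is exactly the point where the paper has to do real work. Your parking scheme is not justified and, as written, has a concrete flaw: the ``final pass flushing all high labels down to $[k]$'' presumably applies $\rh{j+k}{R(j)}$ for each $j$, which is correct for piggybacked vertices (their destination equals $R(j)$ by your choice of parking slot) but sends the contents of a scratch slot $m+k$ to $R(m)$ rather than to the destination $L(i)$ of the vertices you parked there, so scratch slots must be routed separately. More importantly, you never argue that a ``now-empty high label'' is available at the moment a cycle of $L$ needs breaking: if the right operand uses all $k$ labels, every high label is initially occupied, and emptying one early by flushing it to $R(j)$ is safe only if label $R(j)$ is never again the source of a relabeling --- a condition that depends on the order in which you process the functional graph of $L$ (including the chains feeding into its cycles, which you do not discuss at all). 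These obstacles can be overcome, but an argument is required. The paper sidesteps all of this with a uniform three-round scheme: first collapse each label onto the maximum label having the same image under the combined map $f\colon[2k]\to[k]$ --- since the image has at most $k$ elements, at most $k$ labels survive; then shift the surviving labels into the top block $[2k-q+1,2k]$; then map them down to their final values. In each round sources and targets are ordered so that no operation can disturb an earlier one, making correctness immediate. You should either adopt such a scheme or supply the missing availability and ordering argument for yours.
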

\begin{proof}
This follows from Lemma~\ref{lem:scwop-cwop} and Lemma~\ref{lem:cwop-twoscwop} below.
\end{proof}

\begin{lemma}
\label{lem:eta-slick}
For any {\skex} $s$, we can write $\et{i}{j} (s)$ as a {\skex}
$s^\prime$ where $d(s) = d(s^\prime )$.
\end{lemma}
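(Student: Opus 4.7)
The plan is to induct on $d(s)$. The base case $d(s)=0$ is trivial: $s = p(v)$ generates a single vertex, so $\et{i}{j}$ introduces no edges and I can take $s' = s$.

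For the inductive step, write $s = A \oplus_{S,L,R} B$ with $d(A), d(B) < d(s)$. A vertex carries label $i$ in the graph produced by $s$ exactly when it comes from $A$ with a label in $I_A := L^{-1}(i)$ or from $B$ with a label in $I_B := R^{-1}(i)$; define $J_A := L^{-1}(j)$ and $J_B := R^{-1}(j)$ analogously. The edges added by $\et{i}{j}$ therefore split into three groups: those internal to $A$, joining an $I_A$-labelled vertex to a $J_A$-labelled vertex; those internal to $B$, joining an $I_B$-labelled vertex to a $J_B$-labelled vertex; and cross edges, given by the label pairs in $(I_A \times J_B) \cup (J_A \times I_B)$.

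The cross edges are easy to absorb by modifying only the top-level relation: set $S' := S \cup (I_A \times J_B) \cup (J_A \times I_B)$ and keep $L, R$ unchanged. For the within-$A$ edges I build a new slick $k$-expression $A^*$ by iterating the inductive hypothesis. Enumerate $I_A \times J_A$ as $(a_1,b_1), \dots, (a_t,b_t)$; every $a_\ell \neq b_\ell$ because $I_A \cap J_A = \emptyset$ (since $L$ is a function and $i \neq j$), so each $\et{a_\ell}{b_\ell}$ is a legal operator. Starting from $A_0 := A$, the IH rewrites $\et{a_1}{b_1}(A_0)$ as an equivalent slick expression $A_1$ with $d(A_1) = d(A_0) = d(A)$; applying the IH again to $A_1$ (which still has depth strictly less than $d(s)$) yields $A_2$ with $d(A_2) = d(A)$, and so on. After $t$ iterations I obtain $A^*$ with $d(A^*) = d(A)$. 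An analogous construction gives $B^*$ with $d(B^*) = d(B)$. Finally I set $s' := A^* \oplus_{S',L,R} B^*$, whose depth is $1 + \max\{d(A^*), d(B^*)\} = d(s)$, and a quick semantic verification (labels in $A^*$ agree with those in $A$; $L$ and $R$ send the preimage sets to the correct final labels $i$ and $j$; the three groups of new edges appear in the three correct places) shows that $s'$ indeed generates $\et{i}{j}(s)$.

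The one point I expect to require care is the iteration producing $A^*$: at first glance it looks as if chaining $t = |I_A|\cdot|J_A|$ applications of single-$\et{}{}$ rewrites could inflate the depth. The resolution is that each invocation of the inductive hypothesis replaces the current expression by an equivalent one of \emph{exactly} the same depth, so iteration is depth-free. Once that observation is in hand, everything else is bookkeeping about label preimages under $L$ and $R$.
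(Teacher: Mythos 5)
Your proof is correct and follows essentially the same route as the paper: split the new edges into cross edges (absorbed into the top relation $S$) and within-component edges (eliminated by repeatedly invoking the depth-preserving induction hypothesis on the left and right subexpressions), exactly as in the paper's argument with its sets $S'$, $S_{LL}$, $S_{RR}$. Your explicit remarks that $I_A\cap J_A=\emptyset$ (so each $\et{a_\ell}{b_\ell}$ is legal) and that iterating the IH cannot inflate depth are fine points the paper leaves implicit, but the substance is the same.
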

\begin{proof}
We apply induction on $d(s)$.
When $d(s) = 0$, then $s$ has the form $i'(v)$
and $\et{i}{j} (s)$ is equivalent to $i'(v)$.
Now let $d(s) > 0$, where $s = s_1 \oplus_{S, L ,R} s_2$,
for slick expressions $s_1$ and $s_2$,
and assume the statement holds for {\skexs} of smaller depth.
We must construct edges between all pairs of vertices
whose labels are $i$ and $j$ {\em after} $L$ and $R$ are applied.
The edges created by $\et{i}{j}$ are either {\em between}
left and right components or {\em within} a component.
To produce the edges
between components, let
$$
S^\prime =
\{ (i^\prime, j^\prime)  |   L(i^\prime) = i  \mbox{ and }  R(j^\prime) = j   \}
\cup
\{ (j^\prime, i^\prime)  |   L(j^\prime) = j  \mbox{ and }  R(i^\prime) = i    \}
$$
To produce edges within components let
\begin{eqnarray*}
S_{LL} = \{ (i^\prime, j^\prime)  |   L(i^\prime) = i  \mbox{ and }  L(j^\prime) = j   \} \\
S_{RR} = \{ (i^\prime, j^\prime)  |   R(i^\prime) = i  \mbox{ and }  R(j^\prime) = j   \}
\end{eqnarray*}
If $S_{LL}$ and $S_{RR}$ are both empty, then
$\et{i}{j}$ must not place any new edges within the components.
Hence we can complete the induction with the slick expression
\begin{equation}
\label{eq:lemma1a}
s_1 \oplus_{S \cup S^\prime, L ,R}  s_2 .
\end{equation}
However, suppose
$$
S_{LL} = \{ (i_1^\prime, j_1^\prime),  (i_2^\prime, j_2^\prime), \ldots,  (i_k^\prime, j_k^\prime)  \}
$$
is not empty.
To place edges into the left side
of (\ref{eq:lemma1a})
we construct the composition
\begin{equation}
\label{eq:lemma1}
\et{ i_{1}^\prime }{ j_{1}^\prime} ~~ \et{ i_{2}^\prime }{ j_{2}^\prime} ~~ \ldots \et{ i_{k}^\prime }{ j_{k}^\prime} ~~  s_1
\end{equation}
Since $d(s_1) < d(s)$, by repeated use of the induction assumption,
the expression (\ref{eq:lemma1}) can be written
as a single {\scw} expression
$s_1^\prime$
where $d( s_1^\prime) = d(s_1)$.
A similar argument shows that
if $S_{RR} \not= \emptyset$,
correct edges can be
placed into into right side of (\ref{eq:lemma1a})
as a single {\scw} expression $s_2^\prime$,
where $d( s_2^\prime) = d(s_2)$.
Hence we can write
$\et{i}{j} (s)$
as
$$
s_1^\prime \oplus_{S \cup S^\prime, L ,R}  s_2^\prime ,
$$
whose depth is that of $s$.
This completes the induction.
\end{proof}

\begin{lemma}
\label{lem:scwop-cwop}
If $G$ is a graph then  $\scwop(G) \leq  \cwop(G)$.
\end{lemma}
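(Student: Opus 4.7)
The plan is to prove, by structural induction on a $k$-expression $e$ defining $G$, the stronger statement that $e$ can be transformed into an equivalent \skex\ over the same label alphabet $[k]$ (equivalent meaning they produce the same labeled graph). The lemma then follows by stripping the labels.

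The base case, $e = i(v)$, is immediate, since the atom is itself a valid \skex. For the inductive step, I would treat the three operators separately. If $e = e_1 \oplus e_2$ and $s_1, s_2$ are equivalent slick $k$-expressions given by induction, then $s_1 \oplus_{\emptyset, \id, \id} s_2$ does the job: no edges are added and labels are preserved. If $e = \rh{i}{j}(e')$ with equivalent slick $s'$, I would push the relabeling into the topmost operator of $s'$: when $s' = s_1' \oplus_{S, L, R} s_2'$, the expression $s_1' \oplus_{S,\, \rh{i}{j} \circ L,\, \rh{i}{j} \circ R}\, s_2'$ produces the same labeled graph as $\rh{i}{j}(s')$, since only the outermost $L, R$ maps see the relabeling. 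The compositions still send $[k]$ to $[k]$, so no fresh labels are introduced, and when $s'$ is itself an atom the relabeled atom handles the case directly.

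The only remaining operator, edge creation $\et{i}{j}$, is exactly what Lemma~\ref{lem:eta-slick} resolves: given a slick $s'$ equivalent to $e'$, it returns a slick expression with the same label set equivalent to $\et{i}{j}(s')$. This is the step I expect to be the main technical obstacle, and it has already been dispatched by the preceding lemma. Since each inductive step reuses the same label alphabet $[k]$, the induction yields a \skex\ for $G$ with at most $\cwop(G)$ labels, giving $\scwop(G) \leq \cwop(G)$.
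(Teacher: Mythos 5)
Your proof is correct and follows essentially the same route as the paper: structural induction on the $k$-expression, handling $\oplus$ via $\oplus_{\emptyset,\id,\id}$, absorbing $\rh{i}{j}$ into the outermost $L,R$ maps (your composition $\rh{i}{j}\circ L$, $\rh{i}{j}\circ R$ is exactly the paper's $L',R'$), and delegating the $\et{i}{j}$ case to Lemma~\ref{lem:eta-slick}. No substantive differences to report.
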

\begin{proof}
Consider a {\kex} $r$ composed of the operators $i(v)$, $\et{i}{j}$, $\rh{i}{j}$ and $\oplus$.
It suffices to construct an
{\em equivalent}
{\skex}, that is, one that produces the same labelled graph.
We use induction on $d(r)$, the base case being clear.
Let $d(r) > 0$,
and assume all {\kexs} of smaller depths have equivalent {\skexs}.
There are three cases.
Either $r = \rh{i}{j} (r_1)$, $r = r_1 \oplus r_2$, or $r = \et{i}{j} (r_1)$,
where the $r_i$ are {\kexs}.

If $r = \rh{i}{j} (r_1)$,
then by induction $r_1$ has an equivalent {\skex} $s_1$.
If $s_1$ has the form
$i(v)$ we may rewrite $r$ as the
{\skex} $j(v)$.
If $s_1 = a \oplus_{S, L ,R} b$,
we define the functions
$$
\begin{array}{ll}
    L^\prime(t)
     =
    \displaystyle{
            \left\{
                   \begin{array}{cl}
                      L(t)  &  \mbox{ if } L(t) \neq i \\
                        j                  &  \mbox{ otherwise }
                   \end{array}
           \right.}
&
    R^\prime(t)
     =
    \displaystyle{
            \left\{
                   \begin{array}{cl}
                      R(t)  &  \mbox{ if } R(t) \neq i \\
                        j                  &  \mbox{ otherwise }
                   \end{array}
           \right.}
\end{array}
$$
Then $r$ is equivalent to the {\skex} $a \oplus_{S, L^\prime ,R^\prime} b$.

Suppose $r = r_1 \oplus r_2$.
Then by induction, there are {\skexs}
$s_1$ and $s_2$ equivalent to $r_1$ and $r_2$ respectively.
Then $r$ must be equivalent to the
{\skex}
$s_1 \oplus_{\emptyset,\id ,\id} s_2$.

Finally, assume $r = \et{i}{j} (r_1)$.
By induction $r_1$ is equivalent to a {\skex} $s$,
so $r$ is equivalent to $\et{i}{j} (s)$.
By Lemma~\ref{lem:eta-slick}, this is equivalent to some {\skex},
and we are done.
\end{proof}

We note that Lemma~\ref{lem:scwop-cwop} (with Lemma~\ref{lem:eta-slick}) gives an algorithm
for translating a {\kex} to a {\skex}, and in fact is linear-time for constant $k$.
Similarly, Lemma~\ref{lem:cwop-twoscwop} below provides a linear-time algorithm for translating
a {\skex} to a $2k$-expression.
\begin{lemma}
\label{lem:cwop-twoscwop}
If $G$ is a graph then  $\cwop(G) \leq  2 \scwop(G)$.
\end{lemma}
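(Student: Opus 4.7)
The plan is to establish this by structural induction on the depth of a slick $k$-expression $s$ for $G$, showing that an equivalent $2k$-expression can always be constructed. The base case $s = i(v)$ with $i \in [k]$ is immediate, since $i(v)$ is itself a valid $2k$-expression.

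For the inductive step with $s = s_1 \oplus_{S,L,R} s_2$, I would invoke the induction hypothesis to obtain $2k$-expressions $r_1$ and $r_2$ equivalent to $s_1$ and $s_2$ (both with output labels in $[k]$), and then construct $r$ in four stages. First, modify $r_2$ by appending $\rh{j}{k+j}$ for each $j \in [k]$ to shift its output labels into $\{k+1,\ldots,2k\}$; call the result $r_2'$. Second, form the disjoint union $r_1 \oplus r_2'$, so that the left-side labels lie in $[k]$ and the right-side labels in $\{k+1,\ldots,2k\}$. Third, for each $(i,j)\in S$, append $\et{i}{k+j}$; because the left and right label sets are disjoint, these operations create exactly the intended bipartite edges and introduce no spurious intra-component edges --- this is the crux of why $2k$ labels suffice, since the label segregation faithfully mimics the selective edge creation provided by $S$ in the slick operator. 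Fourth, append a sequence of $\rh$ operations realizing the map $\phi\colon [2k]\to[k]$ defined by $\phi(i) = L(i)$ for $i\in[k]$ and $\phi(k+j) = R(j)$ for $j\in[k]$, applying $L$ to the left and $R$ to the right in one pass.

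The main obstacle is the fourth stage: implementing $\phi$ by $\rh$ operations within the $2k$-label budget. This is nontrivial because $L$ and $R$ may be arbitrary functions, including nontrivial permutations whose cycles normally require scratch labels beyond the budget. The plan is to exploit two structural features of $\phi$: its image lies in $[k]$, so the labels in $\{k+1,\ldots,2k\}$ eventually drop out of use and can serve as scratch once cleared; and in the functional graph of $\phi$, vertices originally in $\{k+1,\ldots,2k\}$ have no incoming arcs, so all potential cycles lie entirely within the $L$-part of $[k]$. By scheduling the $\rh$ operations carefully --- for instance, first performing compatible merges of left-$i$ with right-$j$ whenever $L(i) = R(j)$ (which safely combines elements of the same $\phi$-class while freeing up $[k]$-labels for use as scratch), and then handling any remaining $L$-cycles with the newly freed labels --- the entire relabeling can be realized within $2k$ labels. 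Verifying this scheduling rigorously, and in particular handling the case analysis when $L$ or $R$ is non-surjective so that compatible partners may be unavailable, is expected to be the subtlest part of the write-up.
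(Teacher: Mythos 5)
Your high-level plan matches the paper's: induct on depth, use the hypothesis on the children, shift one side's labels into $\{k+1,\ldots,2k\}$ (you shift the right, the paper shifts the left --- a cosmetic difference), take a disjoint union $\oplus$, apply $\eta_{i,k+j}$ for each $(i,j)\in S$, and finish by realizing a relabeling map $\phi\colon[2k]\to[k]$ via $\rho$ operations. You also correctly identify the last step as the crux. However, your scheme for that step has a genuine gap.

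You propose to (i) merge left-$i$ with right-$j$ whenever $L(i)=R(j)$, then (ii) use freed labels as scratch to unwind $L$-cycles. As stated this can fail. Take $k=3$, $L(1)=2$, $L(2)=L(3)=1$, and $R\equiv 3$: then $L$ has the cycle $(1\ 2)$, but $L(i)=R(j)$ has no solution, so step (i) frees nothing and step (ii) is stuck without scratch. Yet $\phi$ is realizable; the ingredient your plan lacks is a same-side merge between two $\phi$-equivalent labels (here $\rho_{3\to 2}$, legal since $\phi(3)=\phi(2)=1$), which is what actually clears a label. You flag the non-surjective case as the ``subtlest part,'' but it is more than subtle --- without same-side merges the scheme does not close, and no amount of scheduling of the cross-side merges rescues it.

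The paper avoids all scratch bookkeeping with three uniform rounds. Round one collapses each $\phi$-class to a single representative by applying $\rho_{i\to g(i)}$ for $i=2k$ down to $1$, where $g(i)=\max\{j:\phi(j)=\phi(i)\}$; since $i\le g(i)$, every relabeling moves strictly upward and no operation feeds or consumes the source of another, and afterwards at most $k$ labels survive. Round two injects the $q\le k$ surviving labels monotonically into the top block $[2k-q+1,2k]$ (again upward, hence safe). Round three sends each label in $[2k-q+1,2k]$ to its $\phi$-value in $[k]$; sources and targets lie in disjoint ranges, so there is no interference. Arbitrary $L$ and $R$, surjective or not, with or without cycles, are handled uniformly and with no case analysis.
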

\begin{proof}
Suppose $k = \scwop(G)$, and $s$
is a {\skex} for $G$.
It suffices to construct an equivalent $2k$-expression $r$. We will show this by induction on the depth of $s$.
This is clear when $s = i(v)$.
Otherwise, let $s = s_1  \oplus_{S,L,R} s_2$,
for
{\skexs} $s_1$ and $s_2$.
By the induction hypothesis, we can assume that each {\skex} $s_i$
has an equivalent $2k$-expression $r_i$,
producing the same labeled graph.
So we have
$$
s = r_1 \oplus_{S,L,R} r_2 .
$$
Note that, even though $r_1$ and $r_2$ may involve labels between $k+1$ and $2k$, they produce labeled graphs with labels in $[k]$. To complete the induction we translate the behavior
of $\oplus_{S,L,R}$ into the operators $\rh{i}{j}$, $\et{i}{j}$ and $\oplus$.
We relabel the vertices on the left side
mapping each label $i \in [k]$ to $i + k$,
and then form the union. Suppressing parentheses, we obtain a subexpression
\begin{equation}
\label{eq:cw-scw}
w = (\rh{1}{ k+1 } ~~  \rh{2}{ k+2 } ~~ \ldots ~~  \rh{k}{2k} ~~ r_1 ) \oplus r_2.
\end{equation}
To obtain edges,
for each $(i,j) \in S$ we apply the operators $\et{i+k}{j}$ to $w$,
obtaining a new expression $w^\prime$.
The relabeling
in (\ref{eq:cw-scw}) ensures that new edges are placed
{\em only} between the left and right sides.
Depending on the functions $L$ and $R$,
we finally relabel the left and right sides back to their desired values in $[k]$.

We need to relabel according to the function $f: [2k] \rightarrow [k]$ defined by
\[
f(i) = \begin{cases}
                L(i-k)    & \mbox{ if }  i > k\\
                 R(i)     & \mbox{ if } i \leq k.
	\end{cases}
\]
It is tempting to simply apply all the relabelings $\rh{i}{f(i)}$,
but one operation could modify earlier operations.
Instead, we do this relabeling in three rounds.

First, we reduce the number of labels to at most $k$, using the $2k$-expression
\[
w'' = \rh{1}{g(1)} \dots \rh{2k}{g(2k)} w'
\]
where
\[ g(i) = \max \{j \mid f(i)  = f(j) \}. \]
Note that the set $\{j \mid f(i)  = f(j) \}$ is not empty, as it contains the label $i$.
Also note that the order of operations and
$i \leq g(i)$
ensure that no $\rh{i}{g(i)}$ changes an earlier one.

Now, we move the set of current labels
$I = \{i_1,\dots,i_q\}$ with $i_1 < i_2 < \dots < i_q$ and $q \leq k$
up into the interval $[2k-q+1,2k]$ using the $2k$-expression
\[
w''' = \rh{i_1}{h(i_1)} \dots  \rh{i_q}{h(i_q)} w''
\]
where $h: I \rightarrow [2k-q+1,2k]$ is given by
$h(i_j) = 2k - q + j$. Since $i_j \leq h(i_j)$,
no operation can affect earlier ones. Note that the break down into the previous two rounds is just to simplify the description.
These relabelings could have been combined into one round.

In the third round, we choose the proper new labels with the $2k$-expression
\[
w'''' = \rh{2k-q+1}{f(h^{-1}(2k-q+1))} \, \rh{2k-q+2}{f(h^{-1}(2k-q+2))} \dots  \rh{2k}{f(h^{-1}(2k))} \, w'''
\]
As each operation maps a label in $[ k+1,2k]$ to one in $[k]$,
no  relabeling can injure others.
After the third round, $f$ is achieved.
Finally, to obtain $r$, we delete all the $\rh{i}{i}$ operations, or declare them as having no effect.
\end{proof}

The cograph diagonalization algorithm in \cite{JacobsTT2015}
exploited the fact that in any cograph of order $n \geq 2$,
there exist two vertices $u$ and $v$
for which either $N(u) = N(v)$ or $N[u] = N[v]$,
so-called siblings.
This means that their corresponding rows and columns in the adjacency matrix
can differ by at most two positions.
By subtracting say, the row (column) of $u$ from the row (column) of $v$,
the row and column of $v$ is annihilated except in one off diagonal position.
The following analog is crucial to our algorithm.

\begin{remark}
\label{obs:equalrows}
Let $T_G$ be a parse tree for a graph $G$ with adjacency matrix $A$,
and  $Q$ a node in $T_G$.
If two vertices $u$ and $v$ have the same label at $Q$,
then their rows (columns) will agree
outside of the matrix for the subtree rooted at $Q$.
\end{remark}

Two matrices are congruent if one can obtain
the other by a sequence of {\em pairs} of elementary operations,
each pair consisting of a row operation followed by the {\em same} column operation.
In our algorithm we only use congruence operations that permute rows and columns or add a multiple of a row and column to
another row and column respectively.
To achieve linear-time we must operate on a sparse representation
of the graph, rather than the adjacency matrix. Moreover, we must represent the undiagonalized portion of the matrix with constant space.

\section{The Algorithm}
\label{sec:algorithm}
We now describe our diagonalization algorithm.
Let $G = (V,E)$ be a graph on $n = |V|$ vertices,
and adjacency matrix $A$, given by a \skex\ $Q_G$.
We wish to find a diagonal matrix congruent to $B = A - c I_n$.
The expression $Q_G$ defines its parse tree $T$ having $2n - 1$ nodes,
a rooted binary tree whose nodes are the subexpressions
of $Q_G$, and whose edges are the pairs of nodes
$\{Q_\ell,Q\}$ and $\{Q_r,Q\}$ for some subexpression $Q$ of $Q_G$ with
$Q = Q_\ell \oplus_{L,R,S} Q_r$ for some $L$, $R$, and $S$.
The algorithm {\scwalgo}
works bottom-up in the parse tree $T$ of the \skex\ $Q_G$.
In particular, our implementation does a
post-order traversal and operates on $Q$ after both children have been processed.

A node $Q$ in the tree produces a data structure that we call a \emph{$k$-box} $b_Q$, namely a $4$-tuple $[k',k'',M,\Lambda]$ where $k'$ and $k''$ are nonnegative integers bounded above by $k$, $M$ is a symmetric matrix of order $m \leq 2k$ and $\Lambda$ is a vector of whose $m$ components are labels in $\{1,\ldots,k\}$. In a high level description, the algorithm traverses the parse tree from the leaves to the root so that, at each node of the parse tree, the algorithm either initializes a box, or it combines the boxes produced by the node's children into its own box, transmitting it to its parent. While processing the node, the algorithm may also produce diagonal elements of a matrix congruent to $A-cI_n$. These diagonal elements are appended to a global array as they are produced.
\begin{figure}[h]\label{fig:highlevel}
{\tt
\begin{tabbing}
aaa\=aaa\=aaa\=aaa\=aaa\=aaa\=aaa\=aaa\= \kill
     \> \texttt{diagonalize}(G,c)\\
     \> {\bf input}: the parse tree $T$ of slick $k$-expression $Q_G$ for $G$, a scalar $c$ \\
     \> {\bf output}: diagonal entries in $D \cong B = A(G) - c I$ \\
     \> Order the $2n-1$ vertices of $T$ as $Q_1,Q_2,\ldots,Q_{2n-1}=Q_G$ in post order\\
     \>{\bf for} $t$ {\bf from } 1 \bf {to} $2n-1$ {\bf do} \\
     \> \> {\bf if} is-leaf($Q_t$) {\bf then construct} $b_{Q_t}$=\texttt{LeafBox}($Q_t,c$)\\
     \> \> {\bf else if} $Q_t = Q_\ell \bigoplus_{S,L,R} Q_r$ \\
     \> \> \> {\bf then construct} $b_{Q_t}$=\texttt{CombineBoxes}($b_{Q_\ell},b_{Q_r}$)\\
     \> \texttt{DiagonalizeBox}($b_{Q_{2n-1}}$)
\end{tabbing}}
\caption{High level description of the algorithm \texttt{diagonalize}.}
\end{figure}

In the remainder of this section, we shall describe each of these stages in detail and prove that the algorithm \texttt{diagonalize} yields the desired output.

At each node $Q$, our algorithm operates on a small $O(k) \times O(k)$ matrix,
performing congruence operations.
These operations represent operations that would be performed on the large $n \times n$ matrix $B$.

Recall that any subexpression $Q$ of $Q_G$
is also a node in the parse tree $T$ of $Q_G$.
For a node $Q$ of $T$, let $n_Q$ be the order of the graph $G(Q)$ generated by
the expression $Q$.
Note that $G(Q)$ is the subgraph of $G$ induced by the vertices of $G(Q)$.
Let $A_Q$ be the adjacency matrix of $G(Q)$, let $I_Q$ be the $n_Q \times n_Q$ identity matrix,
and $B_Q = A_Q - cI_Q$. For simplicity, an entry $vw$ of such a matrix $M$ will always refer to the vertices of the graph $G$ indexing the corresponding rows in $M$. (This avoids the need of keeping track of permutations of rows and columns when describing the algorithm.)

The goal at each node $Q_t$ is to construct, by means of congruence operations, a matrix $\submat{Q}_t$
which is diagonal except for at most $2k$ rows and columns, having the form:
\begin{equation}
\label{eq:matrixBQ}
\submat{Q}_t =
 \left( \,
 \begin{array}{cccccc}
d_1& & \mbox{\large $0$} &  \\
        & \ddots && \multicolumn{3}{c}{\mbox{\LARGE $0$}}  \\
\mbox{\large $0$} & & d_{\ell} \\
 \multicolumn{3}{c}{\mbox{\LARGE $0$}} &
\multicolumn{3}{c}{
\begin{array}{|c|cc|}\hline
& & \\
M^{(0)} & \hspace*{3mm} & M^{(1)} \hspace*{5mm}\\
& & \\\hline
& & \\
& & \\
M^{(1)T} & \hspace*{3mm} & M^{(2)} \hspace*{5mm}\\ & & \\
& & \\\hline
\end{array}
}
\end{array} \;
\right).
\end{equation}
Here, $M^{(0)}$ and $M^{(2)}$ are square matrices of dimensions $k' \times k'$ and $k'' \times k''$
respectively, $M^{(1)}$ is a $k' \times k''$ matrix with $0 \leq k' \leq k'' \leq k$.
Note that $k'$ can be zero in which case we regard $M^{(0)}$ as empty.
The criterion that defines the partition of the matrix
\begin{equation}
\label{eq:formofM}
\M{Q_t} = \begin{array}{|c|cc|}\hline
& & \\
M^{(0)} & \hspace*{3mm} & M^{(1)} \hspace*{5mm}\\
& & \\\hline
& & \\
& & \\
M^{(1)T} & \hspace*{3mm} & M^{(2)} \hspace*{5mm}\\ & & \\
& & \\\hline
\end{array}
\end{equation}
may be visualized in equation (\ref{eq:bigmatrixBQ}).

As it turns out, the matrix $\submat{Q}_t$ will be congruent to $B_{Q_t}$. Moreover, it is crucial to control its relationship to the matrix $\bigmat{Q}_t$.
This is the $n \times n$ matrix that {\em would be} obtained by performing
the same congruence operations on $B$ that are actually performed up to stage $t$, that is, on $\submat{Q}_1, \submat{Q}_2, \ldots, \submat{Q}_t$.
Thus the invariant $B \cong \bigmat{Q}_t$ is maintained. Equation~\eqref{eq:bigmatrixBQ} illustrates how the matrix $\submat{Q}_t$ fits into $\bigmat{Q}_t$, which is never actually computed by the algorithm. We assume, for clarity, that the rows and columns corresponding to vertices that have already been diagonalized or lie in $G(Q_t)$ appear first.
\begin{equation}
\label{eq:bigmatrixBQ}
\bigmat{Q}_t =
 \left( \,
 \begin{array}{ccccccccc}
\multicolumn{3}{c}{\mbox{\LARGE $D$}}
        &  \multicolumn{3}{c}{\mbox{\LARGE $0$}} & \multicolumn{3}{c}{\mbox{\LARGE $0$}} \\[5mm]
\multicolumn{3}{c}{\mbox{\LARGE $0$}} &
\multicolumn{3}{c}{
\begin{array}{|c|cc|}\hline
& & \\
M^{(0)} & \hspace*{3mm} & M^{(1)} \hspace*{5mm}\\
& & \\\hline
& & \\
& & \\
M^{(1)T} & \hspace*{3mm} & M^{(2)} \hspace*{5mm}\\ & & \\
& & \\\hline
\end{array}
}&
\substack{ 0 \\ \\ \vdots \\ \\ 0 \\  \beta_1^1 \\ \\ \\ \vdots
\\ \\ \\ \\ \beta^1_{k^{''}}} &
\substack {\cdots \\  \\ \\ \\ \\ \cdots \\  \cdots \\ \\ \\ \\ \\
\\ \\ \\ \\ \cdots }&
\substack{0 \\ \\ \vdots \\ \\ 0 \\  \beta_1^s \\ \\ \\ \vdots
\\ \\ \\ \\ \beta_{k^{''}}^s }\\
&&&&&&&&\\
\multicolumn{3}{c}{\mbox{\LARGE $0$}}  &
\substack{\hspace{2mm} 0 \hspace{4mm} \cdots \\ \\ \hspace*{-4mm} \vdots\\ \\ \\ \hspace{4mm} 0 \hspace{6mm} \cdots} &
\substack{0 \; \beta_1^1 \\ \\\hspace{-2mm}\vdots \; \;\vdots\\ \\ \\0\;\beta_1^s }
\hspace{5mm}\substack{ \cdots \\ \\ \\ \\ \\ \\\cdots} & \hspace{5mm}\substack{\beta^1_{k^{''}} \\ \\ \vdots \\ \\ \\ \beta^s_{k^{''}}} &  & \laa{M^\prime}\\
\end{array} \;
\right)
\end{equation}
The right side of (\ref{eq:bigmatrixBQ}) shows how the remainder of the large matrix
$B$ is transformed as a side effect of operating on the small submatrix $\M{Q_t}$.
The diagonal matrix $D$ represents {\em all} diagonalized elements produced up until
stage $t$ in the algorithm.
Also the $k'$ rows and columns of $M^{(0)}$ extend with zero vectors, defining the
boundary between $M^{(0)}$ and $M^{(2)}$.
The $\beta_i^j$ are zero-one entries in the partially diagonalized matrix, whose relation with the corresponding entries in the original matrix $B$ will be explained later (see Lemma~\ref{lem:description}).
It is important to observe that after node $Q_t$ has been processed,
all vertices in the subgraph $G(Q_t)$ correspond to rows in $D$ or $\M{Q_t}$.
Some rows of $D$ may correspond to vertices outside of $G(Q_t)$, which have been diagonalized in earlier stages. The submatrix  $M^\prime$ in (\ref{eq:bigmatrixBQ}) contains all undiagonalized rows  $w \not\in G(Q_t)$, may include vertices in $\M{Q_{t'}}$ for $t' \neq t$, and is empty after the last iteration of the algorithm.

It will be convenient to define the $k'$ rows in $M^{(0)}$ as having {\em \typei},
and to define the $k''$ rows of
$M^{(2)}$ as having {\em \typeii}.
It is useful to understand that a row begins as a {\typeii} row, then becomes a {\typei} row, and finally becomes diagonalized.

As mentioned earlier, diagonal elements in diagonalized rows and columns are not transmitted by nodes
to their parents. But label information of vertices corresponding to the rows in $\M{Q_t}$ must be maintained,
and for simplicity, we will say that rows have labels.
The important information is $M=\M{Q_t}$, these labels, and the integers $k'$ and $k''$,
which are stored in the $k$-box (or simply box) $b_{Q_t}=[k',k'',M,\Lambda]$ mentioned above,
whose {\em size} is $k' + k''$.
To ensure that $M^{(2)}$, whose rows have {\typeii}, has order $k'' \leq k$ in the matrix transmitted,
each label appears in at most one \typeii{} row.

When the node $Q_t$ is a leaf corresponding to a subexpression $i(v)$, $B_{Q_t} = [-c]$.
Therefore the box contains a $1 \times 1$ matrix $\M{Q_t} = [-c]$,
whose row is labeled $i$, $k' = 0$, and $k'' = 1$.
\begin{figure}[h]\label{fig:leafbox}
{\tt
\begin{tabbing}
aaa\=aaa\=aaa\=aaa\=aaa\=aaa\=aaa\=aaa\= \kill
     \> \texttt{LeafBox}($Q$,$c$)\\
     \> {\bf input}: $k$-expression $Q=i(v)$, a scalar $c$ \\
     \> {\bf output}: $\left[0,1,[-c],[i]\right]$ \\
\end{tabbing}}
\caption{Procedure \texttt{LeafBox}.}
\end{figure}

To conclude the description of the algorithm \texttt{diagonalize}, we need to define the procedure \texttt{CombineBoxes}. Before describing this process, we state a lemma that summarizes facts about the algorithm that help establish its correctness. We observe that the rows (and columns) in the matrix $\submat{Q}_t$ that are not represented in $\M{Q_t}$ have been diagonalized earlier in the process, so that $\submat{Q}_t$ contains
three types of rows: \typei{}, \typeii{} and diagonalized. For simplicity, when referring to operations, we always mention the row operations, with the understanding that the corresponding column operations are also performed. We also identify vertices with their rows and columns.
\begin{lemma} \label{lem:description}
Let $Q_1,Q_2,\ldots, Q_{2n-1}=Q_G$ be the nodes of the parse tree of the $k$-expression $Q_G$ that defines $G$, listed in post order. Consider the matrices $B$, $\M{Q_{\tau}}$, $\submat{Q}_{\tau}$ and $\bigmat{Q}_{\tau}$ defined above, where $\tau \in \{1,\ldots,2n-1\}$, and let $v$ and $w$ be vertices of $G$.
\begin{itemize}
\item[(a)] If row $v$ is diagonalized at stage $\tau$, then stage $\tau$ applied CombineBoxes. Moreover, if row $v$ was diagonalized when processing $Q_{\tau'}$ for some $1\leq \tau'<\tau$, then no components in this row are modified in stage $\tau$.
\item[(b)] Suppose that  row $v$ has \typei{} in $\M{Q_{\tau'}}$, for some $\tau' \in \{1,\ldots,\tau\}$. Suppose also that $w$ is a row such that, for all $j \in \{\tau',\ldots,\tau\}$,  $v$ and $w$ are not simultaneously in $G(Q_{j})$. Then the entry $vw$ in $\bigmat{Q}_\tau$ is $0$.
\item[(c)] Suppose that row $v$ has \typeii{} in $\M{Q_\tau}$ and $w \not \in G(Q_{\tau})$.  Suppose also that, for all $\tau' < {\tau}$, $w$ has \typeii{} in $\submat{Q}_{\tau'}$ whenever $w \in G(Q_{\tau'})$.  Then $\bigmat{Q}_{\tau}$ and $B$ are equal in position $vw$.
\item[(d)] Suppose that, for some ${\tau}'\in \{0,\ldots,{\tau}-1\}$, the rows $v$ and $w$ are both not in $G(Q_j)$ for all ${\tau}'<j \leq {\tau}$. Then $\bigmat{Q}_{\tau}$ and $\bigmat{Q}_{\tau'}$ are equal in position $vw$ (where $\bigmat{Q}_{0}=B$).
\item[(e)] Suppose that row $v$ has \typei{} in $\M{Q_{\tau'}}$, for some $\tau' \in \{1,\ldots,\tau-1\}$. For all $\tau'<j \leq \tau$, if $v$ is in $\M{Q_{j}}$, then it has \typei{}. That is, a {\typei} vertex cannot become {\typeii} again.
\end{itemize}
\end{lemma}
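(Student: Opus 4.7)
The plan is to prove parts (a)--(e) simultaneously by induction on $\tau$. The base case is a stage that executes \texttt{LeafBox}: a single \typeii{} row labelled $i$ is created with value $-c$, and no operations are applied to any other row or column of $\bigmat{Q}_\tau$. All five properties then hold either vacuously (there is no \typei{} row, nothing has been diagonalized) or immediately from the equality $\bigmat{Q}_\tau = B$ in the positions that concern the newborn vertex.

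For the inductive step, suppose (a)--(e) hold at all stages $\tau' < \tau$ and that $Q_\tau = Q_\ell \oplus_{S,L,R} Q_r$ is processed by \texttt{CombineBoxes}. The key structural fact is that \texttt{CombineBoxes} performs only pairs of congruence operations (row plus matching column) and that every row it touches corresponds to a vertex present in $b_{Q_\ell}$ or $b_{Q_r}$, hence to a vertex of $G(Q_\tau)$. From this, (d) is immediate: if neither $v$ nor $w$ lies in $G(Q_j)$ for any $\tau' < j \leq \tau$, then stage $j$ performs no operation involving row or column $v$ or $w$, so entry $vw$ is unchanged, and a telescoping application of the inductive hypothesis yields equality with $\bigmat{Q}_{\tau'}$. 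Part (a) follows the same way: once $v$ is diagonalized at stage $\tau'$ it is dropped from the outgoing box, so $v$ appears in no later $b_{Q_j}$, and no later \texttt{CombineBoxes} invocation can modify its row.

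Properties (e), (b) and (c) are the substantive ones. The plan is to establish (e) first by inspecting \texttt{CombineBoxes}: its elimination steps only promote \typeii{} rows (either to \typei{} or to diagonalized status), so once $v$ is \typei{} in some $\M{Q_{\tau'}}$ it remains \typei{} in every subsequent box in which it appears. Given (e), property (b) can be proved by a secondary induction on $j$ from $\tau'$ to $\tau$. At $j = \tau'$ the entry $vw$ is zero by the design of the step that made $v$ \typei{} (this is the \emph{raison d'être} of the \typei{} classification and matches Remark~\ref{obs:equalrows}, which guarantees that two vertices sharing a label at a node agree outside the subtree, making them available for the sibling-style clearing operation that produces zeros in column $w$). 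For $j > \tau'$, any operation that modifies row $v$ at stage $j$ is of the form ``add a scalar multiple of some row $u$ in the current box to row $v$''; since $w \notin G(Q_j)$, the inductive hypothesis applied to $u$ (either (b) when $u$ is \typei{}, or an appropriate invariant on \typeii{} rows together with the specific linear combinations \texttt{CombineBoxes} uses) gives entry $uw = 0$, so the modification preserves the zero at $vw$. For (c), I plan to trace the row $v$ from its creation as a leaf: while $v$ is \typeii{} in $\M{Q_j}$ for $j \leq \tau$, every operation involving row $v$ is performed together with rows $u$ that also satisfy the \typeii{}/outside hypothesis of the statement, and \texttt{CombineBoxes} is designed so that the combination of these rows preserves the original $B$-entries in columns indexed by vertices that are outside $G(Q_j)$.

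The main obstacle is that (b), (c) and (e) are essentially a specification for \texttt{CombineBoxes}, which has not yet been described at this point in the paper. Concretely, the delicate case is showing that when \texttt{CombineBoxes} zeroes out cross-entries between $\M{Q_\ell}$ and $\M{Q_r}$ and promotes certain \typeii{} vertices to \typei{}, it does so using row operations whose source rows have the right vanishing pattern outside $G(Q_\tau)$. So the actual proof will have to be presented in tandem with, or immediately after, the definition of \texttt{CombineBoxes}; the induction above provides the skeleton, and each invariant translates into a concrete design requirement that \texttt{CombineBoxes} must verifiably satisfy.
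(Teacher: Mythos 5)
Your skeleton (simultaneous induction over the stages, (e) disposed of first as a triviality, (a) and (d) from the locality of the congruence operations, then (b) and (c) from the structure of the row operations) is the same as the paper's, but as written the proposal has genuine gaps exactly where the work is. First, in (a) and (b) you only account for operations that \emph{touch row $v$}, implicitly assuming $w \notin G(Q_j)$. The hypothesis of (b) only says $v$ and $w$ are never \emph{simultaneously} in $G(Q_j)$; the delicate case is $w \in G(Q_j)$, $v \notin G(Q_j)$, where adding a multiple of a column $w' \in G(Q_j)$ to column $w$ changes the entry $vw$ by a multiple of $vw'$, even though row $v$ is never operated on. One must invoke the induction hypothesis for the pair $(v,w')$ to see that $vw'=0$, so no change occurs; the same point is needed in (a), where a diagonalized row $v$ can in principle be altered through column operations and one must use that all off-diagonal entries of row $v$ are already $0$. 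Your statement that ``no later \texttt{CombineBoxes} invocation can modify its row'' skips precisely this case.

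Second, your sketch of (c) mis-states the mechanism. When $v$ survives as a \typeii{} row of $\M{Q_\tau}$, the rows that get added to $v$ during stage $\tau$ are \emph{\typei{}} rows (the eliminations of Lemma~\ref{lem:Mzero} add the pivot row to every other row of the box, \typeii{} rows included), not rows ``satisfying the \typeii{}/outside hypothesis'' as you claim; the preservation of the $B$-entry at $vw$ comes from applying part (b) to those \typei{} rows $u$ (or from the zeroing effected by Lemma~\ref{lem:k''} if $u$ became \typei{} at stage $\tau$ itself), giving $uw=0$. In addition, (c) needs a comparison of $\bigmat{Q}_{\tau-1}$ with $B$ obtained by chaining the inductive hypothesis (c) at the last stage $\tau'<\tau$ at which $v$ or $w$ sat in a box with part (d) between $\tau'$ and $\tau-1$; this chaining is absent from your outline. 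Finally, you defer (b), (c), (e) to ``design requirements'' on a not-yet-specified \texttt{CombineBoxes}, but in the paper the procedure is pinned down by Lemmas~\ref{lem:k''}, \ref{lem:Mzero} and~\ref{lem:k'} (whose operations are: subtracting one of two equally labelled \typeii{} rows from the other, using Remark~\ref{obs:equalrows}, and adding \typei{} rows to other rows), and the proof of the present lemma must be carried out against those concrete operations rather than postulated properties.
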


Next we will show how \texttt{CombineBoxes} works, that is, we explain how a node produces its box from the boxes transmitted by its children. We shall assume inductively that Lemma~\ref{lem:description} holds for all $\tau$ up to step $t-1$. This can be done because all items hold trivially at the beginning of the algorithm and cannot cease to hold after processing a leaf node, as no congruence operations are performed. Suppose that $Q_t$ is a node with children $Q_{\ell}$ and $Q_r$, that is
$$
Q_t = Q_{\ell} \oplus_{S,L,R} Q_r .
$$
Let $\M{Q_\ell}$ and $\M{Q_r}$
denote the matrices in the boxes transmitted
respectively, by
$Q_{\ell}$ and $Q_r$,
containing the undiagonalized rows in $\submat{Q_\ell}$ and $\submat{Q_r}$.
The goal of $Q_t$ is to combine
$\M{Q_\ell}$ and $\M{Q_r}$
into a single matrix of size at most $2k$
representing $\submat{Q}_t$. More precisely, based on $\M{Q_\ell}$ and $\M{Q_r}$, node $Q_t$ first constructs the submatrix $M$ of $\bigmat{Q}_{t-1}$ induced by the undiagonalized rows in $\submat{Q_\ell}$ and $\submat{Q_r}$. Recall that this cannot be done directly, as $\bigmat{Q}_{t-1}$ is never actually computed by the algorithm.

\begin{lemma}\label{lemma_bigmat}
Suppose that $Q_t= Q_{\ell} \oplus_{S,L,R} Q_r $ and consider the submatrix $M$ of $\bigmat{Q}_{t-1}$ induced by the undiagonalized rows in $\submat{Q_\ell}$ and $\submat{Q_r}$. The following hold:
\begin{itemize}
\item[(a)] If $s \in \{\ell,r\}$ and $v,w \in G(Q_s)$, then the entry $vw$ in $M$ is equal to the entry $vw$ in $\M{Q_s}$.

\item[(b)] If $v \in G(Q_\ell)$, $w \in G(Q_r)$, and at least one of $v$ and $w$ has \typei{}, then the entry $vw$ in $M$ is 0.

\item[(c)] If $v \in G(Q_\ell)$, $w \in G(Q_r)$, and both have \typeii{}, then the entry $vw$ in $M$ is $1$ if $(i,j) \in S$, where $i$ is the type of $v$ and $j$ is the type of $w$, otherwise it is 0.
\end{itemize}
\end{lemma}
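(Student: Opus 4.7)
Let $\tau_\ell$ and $\tau_r$ denote the steps at which $Q_\ell$ and $Q_r$ are processed; by the post-order traversal used by \texttt{diagonalize}, $\tau_\ell < \tau_r = t-1$. The plan is to unfold the definition of $M$ as the principal submatrix of $\bigmat{Q}_{t-1}$ on the undiagonalized rows of $\submat{Q}_\ell$ and $\submat{Q}_r$, and then identify each of its entries by invoking the appropriate clause of Lemma~\ref{lem:description}, using that for $s \in \{\ell,r\}$ the entries of $\M{Q_s}$ coincide with the corresponding entries of $\bigmat{Q}_{\tau_s}$.

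For (a), the subcase $s = r$ is immediate because $\bigmat{Q}_{t-1} = \bigmat{Q}_{\tau_r}$. For $s = \ell$, I would apply Lemma~\ref{lem:description}(d) with $\tau' = \tau_\ell$ and $\tau = t-1$: every node $Q_j$ with $\tau_\ell < j \leq t-1$ lies in the subtree rooted at $Q_r$, hence $G(Q_j) \subseteq G(Q_r)$ is disjoint from $G(Q_\ell)$ and contains neither $v$ nor $w$; thus the $vw$ entry of $\bigmat{Q}_{t-1}$ equals that of $\bigmat{Q}_{\tau_\ell}$, which matches the $vw$ entry of $\M{Q_\ell}$. For (b), assume $v \in G(Q_\ell)$ has \typei{} in $\M{Q_\ell}$ (the case of $w$ \typei{} is symmetric). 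Lemma~\ref{lem:description}(b) applied with $\tau' = \tau_\ell$ and $\tau = t-1$ suffices: for every $j \in \{\tau_\ell,\ldots,t-1\}$ the set $G(Q_j)$ fails to contain both $v$ and $w$, because at $j = \tau_\ell$ we have $w \in G(Q_r)$, which lies outside $G(Q_\ell) = G(Q_{\tau_\ell})$, and for $j > \tau_\ell$ we have $G(Q_j) \subseteq G(Q_r)$ which misses $v$.

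The main obstacle is (c). Here the plan is to invoke Lemma~\ref{lem:description}(c) with the roles of $v$ and $w$ exchanged: take the lemma's ``$v$'' to be our $w$ and the lemma's ``$w$'' to be our $v$, with $\tau = t-1$. The first two hypotheses are immediate: $w$ has \typeii{} in $\M{Q_{t-1}} = \M{Q_r}$, and $v \not\in G(Q_{t-1}) = G(Q_r)$. For the persistence hypothesis, any $\tau' < t-1$ with $v \in G(Q_{\tau'})$ forces $Q_{\tau'}$ to lie in the subtree of $Q_\ell$, since every other already-processed node misses $v$; if $v$ had \typei{} in some such $\submat{Q}_{\tau'}$, Lemma~\ref{lem:description}(e) would propagate this forward and force $v$ to have \typei{} in $\M{Q_\ell}$, contradicting our hypothesis that $v$ is \typeii{} there. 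Lemma~\ref{lem:description}(c) then gives that the $vw$ entry of $\bigmat{Q}_{t-1}$ equals the $vw$ entry of $A$ (using $v \neq w$ and $B = A - cI$). To close the argument, I would appeal to the structural remark made immediately after Figure~\ref{fig:parsetree}: edges in a graph produced by a slick expression appear only between different components, and the least common ancestor of $v$ and $w$ in the parse tree is $Q_t$. Hence $v$ and $w$ are adjacent in $G$ if and only if the edge is produced by $\oplus_{S,L,R}$ at $Q_t$, which happens precisely when $(i,j) \in S$, where $i$ and $j$ are the labels recorded for $v$ in $\M{Q_\ell}$ and for $w$ in $\M{Q_r}$.
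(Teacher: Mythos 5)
Your proof is correct and follows essentially the same route as the paper: each entry of $M$ is identified by invoking the inductively assumed clauses of Lemma~\ref{lem:description} (parts (b), (c), (d), (e)) together with the structure of the slick operation at $Q_t$. The only cosmetic difference is in part (c), where you apply Lemma~\ref{lem:description}(c) directly at stage $t-1$ with the \typeii{} vertex of $G(Q_r)$ playing the inside role, whereas the paper applies it at the earlier child stage and then carries the entry forward to $\bigmat{Q}_{t-1}$ via part (d); the two bookkeepings are equivalent.
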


\begin{proof}
Consider the entry $vw$ in a submatrix $M$ as in the statement of the lemma. By the definition of slick expression, $V(Q_\ell) \cap V(Q_r) = \emptyset$.

For part (a), assume that both $v$ and $w$ are on the same side, say $v,w \in \M{Q_\ell}$. Because the nodes of the parse tree are processed in post-order, $v$ and $w$ cannot be in $V(Q_j)$ for $\ell<j<t$. If $\ell<t-1$, Lemma~\ref{lem:description}(d) applied to $\tau'=\ell$ and $\tau=t-1$ implies that the entry $vw$ is the same in $\bigmat{Q}_{t-1}$ and $\bigmat{Q}_{\ell}$. If $\ell=t-1$, this equality is trivial. By the definition of $\M{Q_\ell}$ in~\eqref{eq:matrixBQ}, the entries $vw$ in $\bigmat{Q}_{\ell}$ and $\M{Q_\ell}$ are equal. The desired conclusion follows because $M$ is a submatrix of $\bigmat{Q}_{t-1}$. Clearly, the analogous statement holds for $v,w \in \M{Q_r}$ and the matrices $\bigmat{Q}_{t-1}$ and $\bigmat{Q}_{r}$.

For part (b), fix $v \in \M{Q_\ell}$ and $w \in \M{Q_r}$. First assume that $v$ has \typei{} in $\M{Q_\ell}$. Note that $w\notin \M{Q_\ell}$ and $v$ is not in $G(Q_j)$ for $j \in \{\ell+1,\ldots,t-1\}$. By Lemma~\ref{lem:description}(b) for $\tau'=\ell$ and $\tau=t-1$, the entry $vw$ in $\bigmat{Q}_{t-1}$ is equal to 0. The same conclusion would be achieved if we assumed that $w$ has \typei{} in $\M{Q_r}$.

We finally consider the case where $v$ and $w$ have  \typeii{} in $\M{Q_\ell}$ and $\M{Q_r}$, respectively. Without loss of generality, assume that $\ell>r$. We know that $w$ has \typeii{} in $\M{Q_r}$ and that $w \notin G(Q_j)$ for all $j \in \{r+1,\ldots,t-1\}$. Moreover, Lemma~\ref{lem:description}(e) implies that, if $j \in \{1,\ldots,r-1\}$ and $w \in G(Q_j)$, then $w$ has \typeii{}. We may apply Lemma~\ref{lem:description}(c) for $\tau=\ell$ and $\tau'=r$ to conclude that $\bigmat{Q}_\ell$ and $B$ are equal in position $vw$. Part (d) of the same lemma, for $\tau'=\ell$ and $\tau=t-1$, implies that  $\bigmat{Q}_{t-1}$ and $B$ are equal in position $vw$. To conclude the proof, since $Q_t= Q_{\ell} \oplus_{S,L,R} Q_r $, we know that the entry $vw$ in $B$ is $1$ if $(i,j) \in S$, where $i$ is the type of $v$ and $j$ is the type of $w$, otherwise it is 0, as required.
\end{proof}

Lemma~\ref{lemma_bigmat} implies that, when processing $Q_t$, the matrix $M$ may be constructed by first taking the disjoint union of the matrices transmitted by both children
and then updating the entries $vw$ where $v$ and $w$ are \typeii{} vertices of different sides. Precisely, if $(i,j) \in S$,
$v$ is a {\typeii} vertex in $G(Q_\ell)$ with label $i$,
and $w$ is a {\typeii} vertex in $G(Q_r)$
with label $j$, we place a one in the row (column) of $v$ and column (row) of $w$.
Observe that the unique label condition imposed on $M^{(2)}$
implies that at most one pair of entries will be modified for any element of $S$.
Let $F$ be the block of ones defining these edges. Then node $Q_t$ starts with
\begin{equation}
\label{eq:boxofQ}
\M{Q_t} =
\left(
  \begin{array}{cc}
\begin{array}{|c|c|}\hline
&  \\
M_\ell^{(0)}& \hspace*{3mm} M_\ell^{(1)} \hspace*{3mm}\\[3mm]\hline
&  \\
M_\ell^{(1)T}  & \hspace*{3mm} M_\ell^{(2)} \hspace*{3mm}\\
&  \\[3mm]\hline
\end{array}
 &
 \begin{array}{cc}
&  \\
\multicolumn{2}{c} 0 \\[3mm]
&  \\
0 \hspace*{2mm} &  \hspace*{5mm} F \\
&  \\[3mm]
\end{array}\\
\begin{array}{cc}
&  \\
\multicolumn{2}{c} 0 \\[3mm]
&  \\
0 \hspace*{2mm} &  \hspace*{5mm} F^T \\
&  \\[3mm]
\end{array}
& \begin{array}{|c|c|}\hline
&  \\
M_r^{(0)}& \hspace*{3mm} M_r^{(1)} \hspace*{3mm}\\[3mm]\hline
&  \\
M_r^{(1)T}  & \hspace*{3mm} M_r^{(2)} \hspace*{3mm}\\
&  \\[3mm]\hline
\end{array} \\
  \end{array}
\right)
\end{equation}
(Here, and in the remainder of the description of the procedure CombineBoxes, we abuse the notation slightly and use $\M{Q_t}$ to refer to the matrix $M$ obtained by merging the boxes transmitted by the children, even before it is a proper matrix for the box $b_{Q_t}$.)
Next $Q_t$ relabels the rows of
$\M{Q_\ell}$ and $\M{Q_r}$,
using the functions $L$ and $R$,
respectively. Note that the {\typei} rows from
$\M{Q_\ell}$ and $\M{Q_r}$
are still {\typei} in (\ref{eq:boxofQ}).
Using permutations of rows and columns we combine the
$k'_{\ell}$ {\typei} rows from $\M{Q_\ell}$
with the $k'_{r}$ {\typei} rows of $\M{Q_r}$,
and combine the $k''_{\ell}$ {\typeii} rows in $\M{Q_\ell }$
with the $k''_{r}$ {\typeii} rows in $\M{Q_r}$,
obtaining a matrix $\M{Q}$ in the form of (\ref{eq:formofM}).
In this matrix, $k' = k'_{\ell} + k'_r$
and $k'' = k''_{\ell} + k''_r$.
By induction, we can assume that $k' \leq k''$, however we are not guaranteed that $k'' \leq k$.

The next three lemmas explain how to produce a box from $\M{Q_t}$ in a way that Lemma~\ref{lem:description} holds.
This requires achieving
$k'' \leq k$ without losing
$k' \leq k''$
using congruence operations. This may also create new permanent elements in the diagonal submatrix $D$ of $\bigmat{Q}_t$.
The matrix is shrunk in three steps.
First, $k'' \leq k$ is obtained by transforming \typeii{} rows into \typei{} rows.
Next, if necessary, $M^{(0)}$ is made $0$ or empty, creating diagonal elements.
Finally, $k' \leq k''$ is obtained, creating additional diagonal elements.

\begin{lemma}
\label{lem:k''}
If $\M{Q_t}$ contains two {\typeii} rows with the same label, then we can produce a matrix that is congruent to it where $k''$ decreases by one and $k'$ increases by one.
\end{lemma}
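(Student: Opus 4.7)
The plan is to exploit Remark~\ref{obs:equalrows}: since $u$ and $v$ are two \typeii{} rows of $\M{Q_t}$ carrying the same label at $Q_t$, their rows in the original matrix $B$ must agree in every column $w \notin G(Q_t)$, and similarly for the columns by symmetry. I would then perform the single congruence step that replaces row $v$ by row $v$ minus row $u$, together with the matching column operation. Provided the ``equal rows'' property still holds in the current state of the large matrix, the new $v$-row and $v$-column become zero in every position outside $\M{Q_t}$. Hence $v$ becomes a \typei{} vertex; reordering $v$ from the $M^{(2)}$ block into the $M^{(0)}$ block then yields a matrix of the form~(\ref{eq:formofM}) with $k''$ decreased by one and $k'$ increased by one, congruent to the original by construction.

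The main technical step is verifying that the agreement between the $u$ and $v$ rows survives all the congruence operations performed at earlier stages. I would argue this by case analysis on a column index $w$ lying outside $\M{Q_t}$, invoking the inductive hypothesis that Lemma~\ref{lem:description} holds through stage $t-1$. If $w$ has already been diagonalized, Lemma~\ref{lem:description}(a) forces the entries in positions $(u,w)$ and $(v,w)$ to be zero. If $w$ is currently a \typei{} vertex in some active box $\M{Q_{t'}}$ whose subtree is disjoint from that of $Q_t$, then Lemma~\ref{lem:description}(b) applies with $\tau' = t'$ and again gives zero in both positions, since neither $u$ nor $v$ lies in $G(Q_j)$ for $t' \le j \le t-1$. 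Finally, if $w$ is currently a \typeii{} vertex in some active box, Lemma~\ref{lem:description}(e) ensures that $w$ has been \typeii{} throughout its history in any box, so Lemma~\ref{lem:description}(c) applies and both $(u,w)$ and $(v,w)$ entries equal the corresponding entries of $B$, which coincide by Remark~\ref{obs:equalrows}.

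One last observation: the merge step producing~(\ref{eq:boxofQ}) only introduces new ones inside $\M{Q_t}$, between \typeii{} rows of the two children; it does not disturb entries outside $\M{Q_t}$, so the agreement argument above applies to the matrix on which the present lemma operates. The remaining bookkeeping---updating the label vector $\Lambda$ so that $v$'s entry sits in the \typei{} region, decrementing $k''$, and incrementing $k'$---is routine. The heart of the argument is really the case analysis above, which is necessary because earlier diagonalization steps can alter entries of the current big matrix in ways that depend on the history of column $w$.
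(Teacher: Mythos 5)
Your proof is correct and takes essentially the same route as the paper: the paper's own argument likewise combines Remark~\ref{obs:equalrows} with the inductively assumed Lemma~\ref{lem:description} to conclude that two same-labelled \typeii{} rows have identical entries ($\beta^i_j=\beta^i_{j'}$) outside $\M{Q_t}$, and then performs exactly the row/column subtraction you describe to turn one of them into a \typei{} row. Your case analysis simply makes explicit what the paper leaves implicit; the one subcase you do not name---columns of vertices whose leaves have not yet been processed---is covered vacuously by the same appeal to Lemma~\ref{lem:description}(c).
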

\begin{proof}
Suppose two {\typeii} rows $j$ and $j'$ have the same label.
Then by Remark~\ref{obs:equalrows} and Lemma~\ref{lem:description} their rows
and columns must agree outside of $\M{Q_t}$.
Therefore in (\ref{eq:bigmatrixBQ}), $\beta^i_j = \beta^i_{j'}$ for all $i$.
Performing the operations
\begin{eqnarray}
R_{j'} & \leftarrow & R_{j'} - R_j \label{lem:subtractrow}\\
C_{j'} & \leftarrow & C_{j'} - C_j \label{lem:subtractcol}
\end{eqnarray}
transforms the {\typeii} row $j'$ into a {\typei} row,
decreasing $k''$ by one, and increasing $k'$ by one.
\end{proof}
We keep applying the operations (\ref{lem:subtractrow}) and
(\ref{lem:subtractcol}) to {\em any pair of {\typeii} } rows with the same label.
By the pigeon-hole principle, this will force $k'' \leq k$.
However, at the end of this step we may have $k' > k''$.
To reduce $k'$ we first make $M^{(0)}=0$.

\begin{lemma}
\label{lem:Mzero}
Given a matrix $\M{Q_t}$ as in~\eqref{eq:formofM}, we can make $M^{(0)}$ zero or empty.
\end{lemma}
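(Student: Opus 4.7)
The plan is to reduce $M^{(0)}$ to zero by performing symmetric Gaussian-style elimination on $\M{Q_t}$, always choosing pivots from inside the type-i block $M^{(0)}$. The key enabling observation is Lemma~\ref{lem:description}(b): every type-i row $i$ of $\bigmat{Q}_t$ is zero in every column outside $\M{Q_t}$. Consequently, adding a multiple of row $i$ (together with the same multiple of column $i$) to any other row/column of $\bigmat{Q}_t$ affects entries only inside $\M{Q_t}$; the invisible block $M'$ and the $\beta$-columns shown in~\eqref{eq:bigmatrixBQ} are untouched, so the invariant $\bigmat{Q}_t \cong B$ is preserved after every operation.

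I would then iterate the following two cases while $M^{(0)}$ is neither zero nor empty. Case~1: some diagonal entry $M^{(0)}_{ii}$ is nonzero. I pivot on it by performing, for each $j \in \{1,\ldots,k'+k''\} \setminus \{i\}$, the paired congruence $R_j \leftarrow R_j - (M_{ji}/M^{(0)}_{ii}) R_i$ and $C_j \leftarrow C_j - (M_{ji}/M^{(0)}_{ii}) C_i$. After these operations the only nonzero entry in row $i$ and column $i$ of $\M{Q_t}$ is the diagonal value $M^{(0)}_{ii}$, so I deposit that entry into $D$ and delete row/column $i$ from $\M{Q_t}$, decreasing $k'$ by one. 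Case~2: all diagonal entries of $M^{(0)}$ are zero but $M^{(0)} \neq 0$. This forces $k' \geq 2$ and produces an off-diagonal $M^{(0)}_{ij} \neq 0$ with $i \neq j \leq k'$. Applying $R_i \leftarrow R_i + R_j$, $C_i \leftarrow C_i + C_j$ sets $M^{(0)}_{ii}$ to $2 M^{(0)}_{ij} \neq 0$ (its previous value and $M^{(0)}_{jj}$ being zero), so the matrix drops through to Case~1.

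Each invocation of Case~1 strictly decreases $k'$, so after at most $k' \leq k$ rounds we reach either $M^{(0)} = 0$ or $k' = 0$, which is the desired conclusion.

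The main subtlety to verify — essentially the only non-routine point — is that each row/column operation is a legitimate congruence on the full matrix $\bigmat{Q}_t$, not merely on the small matrix $\M{Q_t}$ that the algorithm actually manipulates. This is exactly what the type-i property guarantees via Lemma~\ref{lem:description}(b), since the pivot row is always type-i. A secondary point is that no row ever switches from type-ii to type-i (or loses its label) during this procedure, so the remaining invariants of Lemma~\ref{lem:description}, in particular the label-uniqueness of type-ii rows, are preserved without further work.
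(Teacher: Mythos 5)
Your proposal is correct and takes essentially the same approach as the paper: symmetric elimination on $\M{Q_t}$ with pivots chosen inside $M^{(0)}$, justified by the zero extension of {\typei} rows guaranteed by Lemma~\ref{lem:description}(b), so that each paired row/column operation acts only within the small box while preserving congruence with $B$. The only (immaterial) difference is the all-zero-diagonal case, where you add row $j$ to row $i$ to create the nonzero pivot $2m_{ij}$ and then fall back to the diagonal-pivot case, whereas the paper applies a two-step operation that converts the $2\times 2$ block directly into the diagonal pair $-m_{ij}$, $m_{ij}$.
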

\begin{proof}
Let $R_i$ be the $i$-th row and $C_i$ be the $i$-th column of $\M{Q_t}$.
If some diagonal element $m_{ii}$ of $M^{(0)}$ is nonzero, then
subtract $m_{ij} / m_{ii}$ times the
$i$-th row of $\M{Q_t}$ from the $j$-th row.
Do likewise for the columns.
In other words, operate on $\M{Q_t}$ as follows for all $j \neq i$.
\begin{eqnarray*}
R_j & \leftarrow & R_j - \frac{m_{ij}}{m_{ii}}  R_i, \\
C_j & \leftarrow & C_j - \frac{m_{ij}}{m_{ii}}  C_i.\\
\end{eqnarray*}
Note that due to the zero extension of the $j$-th row (see Lemma~\ref{lem:description}(b)), when the same operations are performed to produce  $\widetilde{B_Q^+}_t$, all the work is restricted to the small matrix $\M{Q_t}$, and
the {\em shape} of $\widetilde{B_Q^+}_t$ does not change.
We have removed a row and a column from $M^{(0)}$, and
$\widetilde{B_Q^+}_t$ has one more diagonal element $m_{ii}$.

If the diagonal of $M^{(0)}$ is 0, but an off-diagonal element $m_{ij}$ is nonzero, then
do the operations
 \begin{eqnarray*}
R_j & \leftarrow & R_j + \frac{1}{2}  R_i,  \\
C_j & \leftarrow & C_j + \frac{1}{2}  C_i,
\end{eqnarray*}
followed by
 \begin{eqnarray*}
R_i & \leftarrow & R_i - R_j,  \\
C_i & \leftarrow & C_i - C_j .
\end{eqnarray*}
The relevant entries of $M^{(0)}$ are modified as follows:
\begin{equation}
\label{eq:twobytwotrick}
\begin{pmatrix}
0 & m_{ij} \\
m_{ij} & 0
\end{pmatrix}
\rightarrow
\begin{pmatrix}
0 & m_{ij} \\
m_{ij} & m_{ij}
\end{pmatrix}
\rightarrow
\begin{pmatrix}
-m_{ij} & 0\\
0 & m_{ij}
\end{pmatrix}.
\end{equation}
Now there are two nonzero diagonal elements whose rows can be
annihilated as above, producing two more diagonalized rows.
Again, note that due to the surrounding zero pattern,
only $\M{Q_t}$ is modified when these operations are performed,
so that the portion of $\widetilde{B_Q^+}_t$ outside of $\M{Q_t}$ remains unchanged.
\end{proof}
The operations in Lemma~\ref{lem:Mzero}
create diagonal elements, decrease $k'$ and make $M^{(0)}$ either $0$ or empty.
If we now have $k' \leq k''$ we achieve our goal.
Otherwise the next lemma
ensures this, creating $k' - k''$ zero diagonal elements.
\begin{lemma}
\label{lem:k'}
Consider the matrix $\M{Q_t}$ obtained after applying the operations of Lemma~\ref{lem:Mzero}. Then we can make $k' \leq k''$.
\end{lemma}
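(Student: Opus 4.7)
The plan is to exploit the fact that after Lemma~\ref{lem:Mzero} is applied the block $M^{(0)}$ of $\M{Q_t}$ is zero (or empty), so each of the $k'$ \typei{} rows of $\M{Q_t}$ has its only possibly nonzero entries in the $k''$ columns indexing $M^{(1)}$. Moreover, combining Lemma~\ref{lem:description}(a), applied to rows already lying in the diagonal block $D$, with Lemma~\ref{lem:description}(b), applied to undiagonalized vertices outside $G(Q_t)$, every such \typei{} row is already zero at every position of $\bigmat{Q}_{t-1}$ that lies outside $\M{Q_t}$, and the same will remain true throughout the manipulation below, since only \typei{} rows/columns inside $\M{Q_t}$ will be touched.

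Assume $k'>k''$, otherwise there is nothing to do. Viewing $M^{(1)}$ as a $k'\times k''$ matrix, its rank $\rho$ is at most $k''<k'$. The key step I would take is to run Gaussian elimination on $M^{(1)}$ using only operations of the form $R_j\leftarrow R_j+\alpha R_i$ with $i,j$ both \typei{}, each paired, for congruence, with the symmetric column operation $C_j\leftarrow C_j+\alpha C_i$. Because $M^{(0)}=0$, these column operations keep the top-left block zero; the block $M^{(1)T}$ is automatically updated to the transpose of the new $M^{(1)}$; and $M^{(2)}$ is untouched. Outside $\M{Q_t}$, row $i$ and column $i$ are already zero by the first paragraph, so adding a multiple of them does not alter any exterior entry of $\bigmat{Q}_t$, and the invariants of Lemma~\ref{lem:description} are preserved.

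After reducing $M^{(1)}$ to row echelon form, at least $k'-\rho\geq k'-k''$ of the \typei{} rows become zero rows of $M^{(1)}$, and, because $M^{(0)}=0$, the corresponding vertices then have a completely zero row and column throughout $\bigmat{Q}_t$. I would append a zero to the global diagonal array for each of them and remove them from the box, leaving a matrix with $\rho\leq k''$ \typei{} rows and $k''$ \typeii{} rows, so that $k'\leq k''$ holds, as required.

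The step I expect to be most delicate is the justification that these congruence operations remain genuinely localized to the small matrix $\M{Q_t}$ and do not silently damage other entries of $\bigmat{Q}_t$; this is precisely the content of the first paragraph and relies essentially on both $M^{(0)}=0$ (obtained from Lemma~\ref{lem:Mzero}) and on item~(b) of Lemma~\ref{lem:description}. Once that is in place, the Gaussian elimination step is routine linear algebra and the extraction of the $k'-\rho$ zero diagonal elements is immediate.
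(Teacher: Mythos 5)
Your proposal is correct and follows essentially the same route as the paper: with $M^{(0)}$ zero or empty, reduce $M^{(1)}$ by symmetric row/column congruence operations among the \typei{} rows (the paper phrases it as making $M^{(1)}$ upper triangular, you as row echelon form), note that at least $k'-k''$ of these rows become entirely zero thanks to the zero extension outside $\M{Q_t}$, and output them as zero diagonal entries so that the new $k'$ is at most $k''$. Your explicit appeal to Lemma~\ref{lem:description}(a),(b) to justify that the operations stay localized is exactly the justification the paper uses implicitly via equation~\eqref{eq:bigmatrixBQ}.
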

\begin{proof}
This is obvious if $M^{(0)}$ is empty, so assume that $M^{(0)}=0$ and $k' > k''$.
With simple row operations (subtracting multiples of one row from another one,
and doing permutations of rows) $M^{(1)}$ can be made upper triangular.
Doing the same operations on the columns of $M^{(1)T}$, the matrix $\widetilde{B_Q^+}_t$ remains symmetric.
Since $k' > k''$
at least $k'-k''$ rows of $M^{(1)}$ have become 0.
Thus the diagonal part of $\widetilde{B_Q^+}_t$ has grown by $k'-k''$ with zeros in the diagonal.
The new $k'$ is at most $k''$ meaning that $M^{(1)}$ is at most as high as wide.
\end{proof}

After applying the computations in Lemma~\ref{lem:k''}, Lemma~\ref{lem:Mzero}
and Lemma~\ref{lem:k'} if necessary,  node $Q_t$ is guaranteed
that $k' \leq k'' \leq k$, and returns $\M{Q_t}$. This concludes the description of the procedure CombineBoxes, which appears in Figure~\ref{fig:algo-code}. Note that all operations performed in the lemmas are congruence operations.

\begin{figure}[t]
{\tt
\begin{tabbing}
aaa\=aaa\=aaa\=aaa\=aaa\=aaa\=aaa\=aaa\= \kill
     \> \texttt{CombineBoxes($b_{Q_\ell}$,$b_{Q_r}$)}\\
     \> {\bf input}:  two $k$-boxes $b_{Q_\ell}$ and $b_{Q_r}$ \\
     \> {\bf output}: a $k$-box $b_Q$ \\
     \>  form matrix $M$ in (\ref{eq:boxofQ});  \\
     \>  relabel rows with functions $L$ and $R$; \\
     \>  combine {\typei} rows (columns), combine {\typeii} rows (columns);\\
     \>  ensure {\typeii} rows have distinct labels (Lemma~\ref{lem:k''}); \\
     \>  {\bf if} $k'>k''$ {\bf then} make $M^{(0)}$ empty or zero (Lemma~\ref{lem:Mzero}) and {\bf output} diagonals;\\
     \>  {\bf if} $k'>k''$ {\bf then} ensure $k' \leq k''$ (Lemma~\ref{lem:k'}) and {\bf output} diagonals;\\
\end{tabbing}
}
\caption{\label{fig:algo-code} Procedure \texttt{CombineBoxes}.}
\end{figure}


To conclude the description of the algorithm, we can assume that at the root
$$
Q_G = Q_\ell \oplus_{L,R,S} Q_r,
$$
$L$ and $R$ map all vertices to the same label,
as labels are no longer needed.
After applying the operation in Lemma~\ref{lem:k''}, we will obtain $k'' = 1$.
Applying the operations in Lemma~\ref{lem:Mzero}
and Lemma~\ref{lem:k'} will make $M^{(0)}$ either zero or empty.
If it is empty then the $1 \times 1$ matrix $M^{(2)}$
contains the final diagonal element. Otherwise $\M{Q}$ is a $2 \times 2$ matrix having form
$$
\begin{pmatrix}
0 & a \\
a & b
\end{pmatrix}
$$
and can be made fully diagonal using the transformations in
Lemma~\ref{lem:Mzero}. This is what we call \texttt{DiagonalizeBox} in Figure~\ref{fig:highlevel}.

\begin{remark}
When defining \texttt{diagonalize}, we added several features to simplify the description, but which are not crucial for the algorithm to work, and which would not necessarily be used in an efficient implementation of the algorithm.
\begin{itemize}
\item[(a)] Since the matrix $\bigmat{Q}$ is not computed, one may easily write \texttt{diagonalize} as a recursive algorithm.
\item[(b)] It is not necessary to perform permutations of rows and columns to separate them according to type, it suffices to keep track of the vertices of each type in matrices $\M{Q_t}$.
\item[(c)] The requirement that all vertices are relabelled with the same label at the root node is not crucial. The root could just produce an arbitrary box from the boxes transmitted by its children, and the final step of the algorithm, DiagonalizeBox, could just diagonalize this box with congruence operations in any way.
\item[(d)] When performing \texttt{CombineBoxes}, and after applying Lemma~\ref{lem:k''}, the algorithm uses Lemma~\ref{lem:Mzero} to make $M^{(0)}$ zero or empty if $k'>k''$. In fact, it is not necessary to get to this point, the procedure could have stopped at any point after he operations of Lemma~\ref{lem:Mzero} have produced enough diagonal elements so that $k'\leq k''$. Morever, if we still have $k'>k''$ after making $M^{(0)}$ zero or empty, \texttt{CombineBoxes} asks us to ensure that $k'\leq k''$ using operations of Lemma~\ref{lem:k'}. We could have asked the algorithm to perform more operations to turn $M^{(1)}$ into a matrix with more structure (upper triangular, for example).
\end{itemize}
\end{remark}

To prove the correctness of the algorithm, we shall now see that Lemma~\ref{lem:description} holds by induction on $t$.
\begin{proof}[Proof of Lemma~\ref{lem:description}] Assertion (e) is trivial, as LeafBox only produces rows of {\typeii} and no operation in CombineBoxes may turn a row of {\typei} into {\typeii}.

For assertion (a), the fact that diagonal elements are obtained through some application of CombineBoxes is obvious. Now, let $v$ be a row that was diagonalized when processing $Q_{\tau'}$ for some $\tau'\leq t-1$. First observe that any entry $vw$ with $w \neq v$ is 0 in $\bigmat{Q}_{t-1}$. This is trivial for $\tau'=t-1$ and holds by induction for $\tau'\leq t-2$. Such an entry remains 0 in $\bigmat{Q}_{t}$ because all the operations in Lemmas~\ref{lem:k''},~\ref{lem:Mzero} and~\ref{lem:k'} involve undiagonalized rows and columns in $G(Q_t)$. Then, when a multiple of some column $w'$ is added to $w$, it adds some multiple of $vw'$ to $vw$, and both are 0.

Regarding assertion (b), suppose that row $v$ has \typei{} in $\M{Q_{\tau'}}$, for some $\tau' \in \{1,\ldots,t\}$. Suppose also that $w$ is a row such that, for all $j \in \{\tau',\ldots,t\}$,  $v$ and $w$ are not simultaneously in $G(Q_j)$. We wish to show that entry $vw$ in $\bigmat{Q}_t$ is $0$. By part (a), this is immediate if $v$ or $w$ has been diagonalized up this stage, so assume that this is not the case. First suppose that $\tau'=t$, so that $w \notin G(Q_t)$. The first possibility is that the algorithm turned $v$ to {\typei} at this stage, performing an operation of Lemma~\ref{lem:k''}, which zeroed entry $vw$ in $\bigmat{Q}_t$. After this, {\typei} rows are only added to one another (see Lemmas~\ref{lem:Mzero} and~\ref{lem:k'}), so that such zero entries are not altered. The second option is that $v$ was already of {\typei} in an earlier stage $t'$. Since $w$ has not been diagonalized, the same assertion (b) must hold replacing $\tau'=t$ by $\tau'=t'$, and we would be in the case $\tau'<t$.

For $\tau'<t$, by induction the entry $vw$ in $\bigmat{Q}_{t-1}$ is $0$. If $v \in G(Q_t)$, then the fact that $v$ has not been diagonalized and assertion (e) ensure that $v$ has \typei{} in $\M{Q_t}$. Note that $w \notin G(Q_t)$. Adding the multiple of any row $v'$ to $v$ does not change entry $vw$, as $v'$ must have {\typei} and therefore entry $v'w$ is zero by Lemma~\ref{lem:k''} (if $v'$ turned to {\typei} at stage $t$), or by induction (if this happened at an earlier stage). Next assume that $v \notin G(Q_t)$. Then the only way to change entry $vw$ is to add a multiple of some column $w' \in G(Q_t)$ to column $w$, and in particular $w',w \in G(Q_t)$. However, we may use the induction hypothesis for $v$ and $w'$ to conclude that the entry $vw'$ in $\bigmat{Q}_{t-1}$ is also $0$, and hence no change occurs.

Assertion (d) holds for the following reason. Assume that rows $v$ and $w$ are not in $\M{Q_j}$ for all $\tau'<j \leq t$, where $0 \leq \tau'< t$.
By induction, $\bigmat{Q}_{\tau'}$ and $\bigmat{Q}_{t-1}$ are equal in position $vw$ (recall that $\bigmat{Q}_{0}=B$). Note that this position could be changed only if the multiple of some row was added to row $v$ or the multiple of some column was added to column $w$, but this cannot happen, as all the operations in Lemmas~\ref{lem:k''},~\ref{lem:Mzero} and~\ref{lem:k'} involve rows and columns in $G(Q_t)$, and neither $v$ nor $w$ is in $\M{Q_t}$.

To conclude, we prove assertion (c). Suppose that row $v$ is of \typeii{} in $\M{Q_t}$, $w \notin G(Q_t)$, and $w$ has \typeii{} in $\submat{Q}_{j}$ whenever $w \in G(Q_{j})$ for some $j < t$. First observe that $v$ and $w$ cannot be both in some $G(Q_j)$ for $j<t$: by the ordering of the nodes of the parse tree, since $v \in \M{Q_t}$, when $j <t$, $v$ belongs to precisely those $G(Q_{j})$ on the path from its leaf to $Q_t$.  Since $w \notin G(Q_t)$, it cannot belong to any of these $G(Q_{j})$.

We claim that $B$ and $\bigmat{Q}_{t-1}$ are equal in position $vw$. Let $\tau'$ be the largest $j<t$ such that either $v$ or $w$ is in $\M{Q_j}$ ($\tau'=0$ if there is no such $j$). If $\tau'=0$, the claim holds by substituting $\tau'=0$ and $\tau = t-1$ in part (d). If $\tau'>0$, then either $v$ or $w$ is of {\typeii} in $\M{Q_{\tau'}}$ ($w$ by hypothesis, $v$ by part (e)). By induction, we see that $B$ and $\bigmat{Q}_{\tau'}$ are equal in position $vw$. By the choice of $\tau'$ we may use assertion (d) for $\tau'$ and $\tau=t-1$, and we see that $\bigmat{Q}_{\tau'}$ and  $\bigmat{Q}_{t-1}$ are equal in position $vw$, which proves that $B$ and $\bigmat{Q}_{t-1}$ and agree at position $vw$. To complete the proof, we show that $\bigmat{Q}_{t-1}$ and $\bigmat{Q}_{t}$ are equal in position $vw$. Note that no row (or multiple thereof) can be added to row $w$, as $w \notin G(Q_t)$. Moreover, if a multiple of a row $u$ is added to row $v$, then, because $v$ has \typeii{} in $\M{Q_t}$ and by the description of \texttt{CombineBoxes}, $u$ must have been of \typei{} at the moment when the operation has been performed. There are two possibilities: either $u$ had \typei{} in $\M{Q_{\tau'}}$, for some $\tau'<t$, (and thus $\bigmat{Q}_{t-1}$ is 0 in position $uw$ because of part (b) with $\tau=t-1$) or $u$ became of \typei{} at stage $t$ (this happened due to an operation of Lemmas~\ref{lem:k''}, and performing the corresponding operation on the matrix derived from $\bigmat{Q}_{t-1}$ turns the entry $uw$ to 0. In both cases, adding a multiple of $u$ to $v$ does not change the entry $vw$ in $\bigmat{Q}_{t}$, as required.
\end{proof}

\begin{theorem}
Let $G$ be a graph with adjacency matrix $A$, given by a \skex\ $Q_G$ with parse tree $T$, and let $c \in \mathbb{R}$. Algorithm \texttt{diagonalize} correctly outputs the diagonal elements of a diagonal matrix congruent to $B= A - cI$. Moreover, this is done in $O(k^3 n)$ operations.
\end{theorem}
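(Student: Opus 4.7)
The plan is to separate correctness from the operation count, and to treat correctness via a single invariant that runs through the parse tree in post order. Specifically, I would argue by induction on $t$ that after processing $Q_t$, one has $\bigmat{Q_t} \cong B$, where $\bigmat{Q_0}=B$ is the base case. The inductive step requires showing that every move prescribed in \texttt{LeafBox}, \texttt{CombineBoxes}, and \texttt{DiagonalizeBox} corresponds to a genuine congruence operation on the (virtual) matrix $\bigmat{Q_{t-1}}$. All operations explicitly described in Lemmas~\ref{lem:k''}, \ref{lem:Mzero}, and \ref{lem:k'} are already of the form $R_j \leftarrow R_j + \alpha R_i$ paired with $C_j \leftarrow C_j + \alpha C_i$ (or row/column permutations), so each is a congruence on whatever matrix they are applied to; the only issue is that the algorithm touches only the block $\M{Q_t}$, not the entire $\bigmat{Q_t}$.

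Here is where Lemma~\ref{lem:description} does the heavy lifting. Parts (a) and (b) guarantee that \typei{} rows extend outside $\M{Q_t}$ by zeros (modulo diagonalized coordinates, which are inert by part (a)), so using a \typei{} row as a pivot in Lemmas~\ref{lem:Mzero} and~\ref{lem:k'} does not disturb any entry outside the small block. For the row-combination move of Lemma~\ref{lem:k''}, two \typeii{} rows with the same label agree outside $\M{Q_t}$ by Remark~\ref{obs:equalrows} (justified by Lemma~\ref{lem:description}(c)(e)), so $R_{j'} \leftarrow R_{j'} - R_j$ zeros their difference everywhere outside the block and converts $j'$ into a \typei{} row. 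Thus, in each case, performing the same operation implicitly on $\bigmat{Q_{t-1}}$ yields exactly what the algorithm obtains on $\M{Q_t}$ together with predictable changes to the outside, preserving $\bigmat{Q_t} \cong B$. At the root, \texttt{DiagonalizeBox} reduces the surviving $1\times 1$ or $2\times 2$ block using the same trick of equation~\eqref{eq:twobytwotrick} if needed; since $M'$ is empty by that stage, $\bigmat{Q_G}$ coincides with the diagonal matrix $D$ of outputs, which is therefore congruent to $B$.

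For the complexity, I would count operations node by node. The parse tree has $2n-1$ nodes. \texttt{LeafBox} runs in $O(1)$ time, contributing $O(n)$ overall. At an internal node $Q_t$, the matrix $\M{Q_t}$ has size at most $2k$, so assembling~\eqref{eq:boxofQ} from the children's boxes (including relabeling by $L$, $R$, and inserting the block $F$ of new edges) costs $O(k^2)$. The scan for equal-labeled \typeii{} pairs in Lemma~\ref{lem:k''} involves at most $O(k)$ merges, each an $O(k)$ row/column subtraction, contributing $O(k^2)$. Lemma~\ref{lem:Mzero} is a Gauss-style clearing of an $O(k)\times O(k)$ block and costs $O(k^3)$, as does the triangularization in Lemma~\ref{lem:k'}. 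Hence each internal node costs $O(k^3)$, and the total is $O(k^3 n)$ operations.

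The main obstacle I expect is precisely the bookkeeping in the correctness argument: because $\bigmat{Q_t}$ is never materialized, every step that the algorithm performs on $\M{Q_t}$ must be justified as the restriction of a congruence on $\bigmat{Q_{t-1}}$ whose effect outside $\M{Q_t}$ is either null or fully accounted for. Each individual verification is easy, but threading them through all five parts of Lemma~\ref{lem:description} and every case of \texttt{CombineBoxes} requires care; once this is done, the inductive hypothesis carries $\bigmat{Q_t}\cong B$ forward to the root and the remainder of the proof is routine.
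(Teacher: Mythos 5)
Your proposal is correct and follows essentially the same route as the paper: it treats the virtual matrix $\bigmat{Q}_t$, uses Lemma~\ref{lem:description} (together with the agreement of equal-labelled \typeii{} rows) to show that each block operation on $\M{Q_t}$ is the restriction of a congruence on $\bigmat{Q}_{t-1}$ whose effect outside the block is null or controlled, and then counts $O(k^2)$ row/column operations of cost $O(k)$ each per node of the parse tree. The only point to make explicit is the appeal to Lemma~\ref{lemma_bigmat}, which guarantees that the assembled matrix in~\eqref{eq:boxofQ} (including the block $F$ determined by $S$) really is the corresponding submatrix of $\bigmat{Q}_{t-1}$ — this is exactly how the paper opens its argument, and your assembly step implicitly relies on it.
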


\begin{proof}
It is clear that the operations performed by \texttt{diagonalize} are congruence operations. Indeed, besides simultaneous permutations of rows and columns,
the operations performed (in Lemma~\ref{lem:k''}, Lemma~\ref{lem:Mzero} and Lemma~\ref{lem:k'})
only add multiples of some row $i$ to some other row $j$, followed by adding the same multiple of
column $i$ to column $j$. We now argue that the elements that the algorithm outputs are precisely the diagonal elements of a diagonal matrix congruent to $B= A - cI$. As above, let $\bigmat{Q}_t$ be the $n \times n$ matrix obtained from $B$ by performing the same congruence operations on $B$ that are actually performed on the corresponding rows of matrices $\M{Q_t}$ up to stage $t$. Lemma~\ref{lemma_bigmat} ensures that, at the beginning of each stage $t$, the algorithm always starts from a submatrix $M$ of $\bigmat{Q}_{t-1}$. Also, Lemma~\ref{lem:description} guarantees that equation~\eqref{eq:bigmatrixBQ} holds at the end of each stage: part (a) ensures that a row that was diagonalized at some stage cannot be modified at later stages. If $v \in G(Q_t)$ and $w \notin G(Q_t)$ (and has not been diagonalized), part (b) ensures (applying it twice with the roles of $v$ and $w$ interchanged) that the element $\beta_{i}^j$ corresponding to the entry $vw$ in $\bigmat{Q}_t$ is equal to 0 if $v$ has \typei{} or if $w$ has \typei{} in $\M{Q_{\tau}}$ for some $\tau<t$. Finally, part (c) ensures that in the remaining cases the entry $vw$ is equal to the corresponding entry in $B$. We conclude that, at each stage, the elements in the output of \texttt{CombineBoxes} (which are called \emph{diagonals} in Figure~\ref{fig:algo-code}) are indeed diagonal elements of the final matrix produced. The only nonzero elements of the final matrix that are not of this form are obtained by the application of \texttt{DiagonalizeBox}, which clearly outputs the final diagonal elements of a diagonal matrix $D$ congruent to $B$.

A time bound of $O(k^3 n)$ is straightforward, because the parse tree has $O(n)$
nodes ($2n-1$ to be precise), $n = |V|$ leaves, and $n-1$ internal nodes with 2 children each.
At each node, since the algorithm acts on an $O(k) \times O(k)$ matrix, the number of row and column operations performed by Lemma~\ref{lem:k''}, Lemma~\ref{lem:Mzero} and Lemma~\ref{lem:k'} is $O(k^2)$. Each such operation requires $O(k)$ sums and products.
\end{proof}

Now, we can apply Sylvester's law of inertia. Symmetric matrices over the reals have $n$ real eigenvalues
(with multiplicities). The inertia of a symmetric real matrix $B$ is the triple $(n_+, n_0,n_-)$ giving the
number of eigenvalues of $B$ that are positive, zero, and negative respectively.
Sylvester's law says that congruent real symmetric matrices have the same inertia.

\begin{corollary}
The number of eigenvalues of $A$ in a real interval $I$ can be computed in time $O(k^3 n)$ for graphs of \cw\ $k$.
\end{corollary}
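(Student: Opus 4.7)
The plan is to combine the preceding diagonalization theorem with Sylvester's Law of Inertia and the translation between $k$-expressions and slick $k$-expressions. Given a graph $G$ presented with a $k$-expression certifying $\cwop(G) \le k$, I would first invoke the linear-time transformation implicit in Lemma~\ref{lem:eta-slick} and Lemma~\ref{lem:scwop-cwop} to obtain an equivalent \skex{} for $G$. This step preserves the parameter, since $\scwop(G) \le \cwop(G)$ by Theorem~\ref{thr:scwcwratio}, so the resulting expression uses at most $k$ labels and has parse tree with $O(n)$ nodes.

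Next, for an interval $I = (c,d]$, I would run \texttt{diagonalize}$(G,c)$ and \texttt{diagonalize}$(G,d)$ on this parse tree. By the previous theorem, each call produces, in $O(k^3 n)$ time, a diagonal matrix $D_c$ congruent to $A - cI_n$ and a diagonal matrix $D_d$ congruent to $A - dI_n$. Sylvester's Law of Inertia then guarantees that $D_c$ and $A - cI_n$ share the same inertia; in particular, the number of positive diagonal entries of $D_c$ equals the number of eigenvalues of $A$ strictly greater than $c$, and similarly for $D_d$. Subtracting these two counts yields the number of eigenvalues of $A$ in $(c,d]$, as discussed in the introduction.

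For intervals of other types (open, closed, or half-open in the opposite sense), I would use the same diagonal matrices together with the number of zero diagonal entries of $D_c$ and $D_d$, which by Sylvester's Law count the multiplicities of $c$ and $d$ as eigenvalues of $A$; small additions and subtractions handle the inclusion or exclusion of the endpoints. Unbounded intervals $(-\infty, c]$, $(c, \infty)$, etc., are handled by a single call plus the identity $n_+ + n_0 + n_- = n$. The total running time is dominated by the two calls to \texttt{diagonalize}, giving $O(k^3 n)$.

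The main conceptual step is recognizing that all the work has already been done: the algorithmic content lies in the previous theorem, and the only genuinely new ingredient is the reduction from clique-width to slick clique-width, which was established earlier and runs in linear time for constant $k$. There is no real obstacle; the proof is essentially a bookkeeping argument that chains together Theorem~\ref{thr:scwcwratio}, the linear-time translation, the $O(k^3 n)$ diagonalization bound, and Sylvester's Law.
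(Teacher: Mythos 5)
Your proposal is correct and follows essentially the same route as the paper: run \texttt{diagonalize} at the two endpoints, apply Sylvester's Law of Inertia to read off the counts from the signs of the diagonal entries, and adjust for endpoint inclusion; the clique-width-to-slick translation you mention is the linear-time conversion the paper records right after Lemma~\ref{lem:scwop-cwop}. No gaps worth noting.
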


\begin{proof}
The eigenvalues of $B = A-c l$ are obtained by subtracting $c$ from the eigenvalues of $A$.
To compute the number of eigenvalues of $A$ in a nonempty interval $I=[a,b]$, for instance, we run our algorithm with $c=a$ and $c=b$.
From the output, we see the numbers of positive, zero, and negative diagonal elements.
Let them be $(n_+^a, n_0^a, n_-^a)$ and $(n_+^b, n_0^b, n_-^b)$.
Then, obviously the number of eigenvalues in $(a,b)$ is $n_-^b - n_-^a-n_0^a$. Of course, there is nothing special about open (or bounded) intervals, and a similar argument would lead to the number of eigenvalues in any given real interval.
\end{proof}

It is worth stating that the whole algorithm is very fast, as there are no large constants hidden in the $O$-notation. In fact, it is not hard to modify Algorithm \texttt{diagonalize} to obtain an algorithm that requires only $O(k^2n)$ operations. Instead of performing the operations of Lemmas~\ref{lem:k''},~\ref{lem:Mzero} and~\ref{lem:k'} successively, we may keep $M^{(0)}$ equal to 0 or empty and $M^{(1)}$ as an upper triangular matrix, namely a matrix $(a_{ij})$ such that $a_{ij}=0$ for every $i>j$. We insert row vectors (namely a {\typeii} vertex that becomes {\typei}) one at a time. At the end of the step, inserting such a vector may incur a cost of $O(k^2)$ in terms of $M^{(1)}$ operations. It does not matter that up to $k$ row vectors are inserted in one node, because every row vector is inserted only once, and there are only $n$ rows, one for each vertex in the graph.

Every time a new row $v$ is inserted (and to simplify our description we assume that it becomes the first row of the new matrix), instead of performing the operations of Lemma~\ref{lem:Mzero} using the diagonal element, we choose the largest index $j$ such that $j$ has {\typei} and position $vj$ is nonzero. We use this element to eliminate all the other nonzero elements of row (and column) $v$ in $M^{(0)}$. The choice of $j$ ensures that the operations performed do not destroy the upper triangular nature of $M^{(1)}$ (if we ignore row $v$). Once this is done, we replace the two remaining nonzero off-diagonal elements by diagonal elements as in Lemma~\ref{lem:Mzero} and either diagonalize new rows or get $M^{(0)}=0$. Then we perform operations of Lemma~\ref{lem:k'} (and possibly exchange rows and columns) to turn $M^{(1)}$ into the upper triangular matrix mentioned in the previous paragraph. The cases where the diagonal element is the single such nonzero element in this row, and where all the elements of this row are zero may be treated similarly.

\section{Example}
\label{sec:example}

To see how the algorithm acts on a concrete example, we go back to the graph of Figure~\ref{fig:int1}, whose parse tree is given in Figure~\ref{fig:parsetree}. We apply the algorithm to the graph defined by this slick $2$-expression for $c=0$.
Since $k=2$, the boxes created by the leaves may be of the following two types:
$$1(v): [k^{\prime},k^{\prime \prime},M,labels]=\left[ 0,1,(0),(1)\right],~\textrm{ or } 2(v): \left[ 0,1,(0),(2)\right],$$
This means that $k^{\prime}= 0$  (thus $ M^{(0)}$ is empty) and that $k^{\prime\prime}=1$ and $M^{(2)}=(0)$.

The nodes $B$, $E$ and $F$ of the parse tree perform identical operations to produce
$$\left[0,2, \left( \begin {array}{rr} 0&1\\ 1&0
\end {array} \right) , \left( \begin {array}{r} 1\\ 2\end {array} \right) \right],$$
meaning that $ M^{(0)}$ is empty, $k^{\prime\prime}=2$, and
 $M^{(2)}= \left( \begin {array}{rr} 0&1\\ 1&0
\end {array} \right)$ and the labels of its rows are 1 and 2, respectively. Up to this point, we did not need to apply Lemmas~\ref{lem:k''},~\ref{lem:Mzero} or~\ref{lem:k'}.

When the algorithm starts processing node $D$, it uses the boxes produced by $E$ and $F$ together with $S=\{(2,2)\}$ to form the matrix
$$
\left( \begin {array}{rrrr}  0&1&0&0\\ 1&0&0&1
\\ 0&0&0&1\\ 0&1&1&0\end {array}
 \right).
$$
It then uses row and column operations described in Lemma~\ref{lem:k''} (with $(j',j)$ given by $(1,3)$ and $(2,4)$) to produce
$$
\left[2,2, \left( \begin {array}{rr;{.5pt/1pt}rr}
0&2&0&-1\\2&-2&-1&1
\\ \hdashline[.5pt/1pt] 0&-1&0&1\\  -1&1&1&0\end {array}
  \right) , \left( \begin {array}{c} 1\\ 2
\\ 1\\ 2\end {array} \right)
\right].
$$
This means that
$$k^{\prime}=k^{\prime \prime}= 2, ~M^{(0)}= \left(
\begin {array}{rr} 0&2\\ 2&-2 \end{array}\right) ,~\textrm{ and }M^{(2)}= \left(
\begin {array}{rr} 0&1\\ 1&0 \end{array}\right).$$
However, there is no need to resort to Lemmas~\ref{lem:Mzero} or~\ref{lem:k'}, as the relation
$k^{\prime}\leq k^{\prime\prime}\leq k \leq 2$ holds.

Next, node $C$ receives boxes from a leaf and node $D$ that, together with $S=\{(1,2)\}$, produce the following matrix and vector of labels (recall that the entries created by $S$ only affect \typeii{} rows and columns.):
$$ \left( \begin {array}{rrrrr} 0&0&0&0&1\\ 0&0&2&0&-1
\\ 0&2&-2&-1&1\\ 0&0&-1&0&1
\\ 1&-1&1&1&0\end {array} \right),~~\left( \begin {array}{r}1\\ 1\\ 2\\ \ 1\\ 2 \end{array} \right).$$
Vertices of the right component are relabelled $1\rightarrow 2$ and, as in the description of the algorithm, we exchange rows (and columns) 1 and 3 to keep \typei{} and \typeii{} rows together. This leads to
$$M=\left( \begin {array}{rr;{.5pt/1pt}rrr} -2&2&0&-1&1\\ 2&0&0&0&
 -1\\ \hdashline[.5pt/1pt]  0&0&0&0&1\\ -1&0&0&0&1
\\ 1&-1&1&1&0\end {array} \right),
\textrm{ and labels }\left( \begin {array}{r}2\\  2\\  1\\  2\\ 2 \end{array} \right).$$

At this point $M^{(2)}$ is a $3 \times 3$ matrix and $k^{\prime\prime}=3 > k=2$.
We first apply Lemma~\ref{lem:k''} to $j'=4$ and $j=5$ in order to preserve the uniqueness of labels for \typeii{}. Moreover, as in the above description of the algorithm, we exchange rows $3$ and $4$ to keep types together:
$$M=\left( \begin {array}{rrr;{.5pt/1pt}rr} -2&2&-2&0&1\\ 2&0&1&0&
-1\\ -2&1&-2&-1&1\\ \hdashline[.5pt/1pt] \noalign{\medskip}0&0&-1&0&1
\\ 1&-1&1&1&0\end {array} \right)
\textrm{ with labels }
\left( \begin {array}{r}2\\  2\\  1\\  2\\  2 \end{array} \right).$$
Now $k^{\prime\prime}=2<3=k^{\prime}$, so we need to reduce
$$M^{(0)}= \left( \begin {array}{rrr} -2&2&-2 \\ 2&0&1 \\
-2&1&-2\end{array} \right).$$
The diagonal element of the first line of $M^{(0)}$ is -2, hence we can diagonalize that row as described in Lemma~\ref{lem:Mzero}, so that $M$ becomes
$$\left( \begin {array}{r;{2pt/2pt}rr;{.5pt/1pt}rr} -2&0&0&0&0\\  \hdashline[2pt/2pt]   0&2&-1&0&0
\\  0&-1&0&-1&0\\ \cdashline{2-5}[.5pt/1pt]   0&0&-1&0&1
\\   0&0&0&1&\frac{1}{2}\end {array} \right)
$$
We have found the diagonal element $-2$, which is stored by the algorithm, and the first row (and column) are removed from $M$. We may repeat the argument for the next two rows of $M^{(0)}$, which leads to diagonal elements $2$ and $-\frac{1}{2}$. At the end of the process, we have $k^\prime=0$, $k^{\prime\prime}=2$, and $M^{(2)}=\left( \begin{array}{rr}
                     2 & 1 \\
                     1 & \frac{1}{2}
                   \end{array}\right)$. Node $C$ transmits the box
$$\left[0,2,\left( \begin{array}{rr}
                     2 & 1 \\
                     1 & \frac{1}{2}
                   \end{array}\right),\left(\begin{array}{r}
                                        1 \\
                                        2
                                      \end{array}\right)\right].$$

We finally process node $A$, which combines the boxes produced by $B$ and $C$. Note that, in this matrix, we have $k'=0$, $k''=4$ and, after adding the edge of $S=\{(1,2)\}$ and then relabeling, we have
$$\left( \begin {array}{rrrr} 0&1&0&1\\ 1&0&0&0
\\ 0&0&2&1\\ 1&0&1&\frac{1}{2}\end {array}
 \right) \textrm{ with labels }
\left( \begin {array}{r}1\\  1\\  1\\  1 \end{array} \right).$$
Applying Lemma~\ref{lem:k''} for $(j',j) \in \{(1,4),(2,4),(3,4)\}$, we obtain the following matrix, with $k'=3$ and $k''=1$:
$$\frac{1}{2} \cdot \left( \begin {array}{rrr;{.5pt/1pt}r} -3&1&-3&1\\ 1&1&-1&-1
\\ -3&-1&1&1\\ \hdashline[.5pt/1pt] 1&-1&1&1\end {array}
 \right).
$$

The algorithm now performs operations of Lemma~\ref{lem:Mzero}, which leads to
$$\left( \begin {array}{cccc} -\frac{3}{2}&0&0&0\\ 0&\frac{2}{3}&0&0
\\ 0&0&\frac{1}{2}&0\\ 0&0&0&0\end {array}
 \right),
$$
and the final diagonal values are $D=(-2,2,-\frac{1}{2}, -\frac{3}{2},\frac{2}{3},\frac{1}{2},0)$. This means that the graph has three negative eigenvalues, three positive eigenvalues and 0 is an eigenvalue with multiplicity one. In fact, the actual spectrum may be approximated by
$$ -1.9098;-1.6180;-1.2726;0;0.6180;0.8692;3.3132.$$

\section{Applications}
\label{sec:appl}
Section~\ref{sec:algorithm} showed how to compute, in linear time,
the number of eigenvalues of a graph in a given interval.
Here we give some alternate ways to bound this number.
If $\lambda$ is an eigenvalue in a graph $G$, we denote its multiplicity with $m_G (\lambda)$.
If $G$ is represented by a {\skex} $Q$
we will also use $m_Q (\lambda)$ to denote the multiplicity of $\lambda$ in $G$.
We can deduce some simple properties about multiplicity
by studying the behavior of algorithm {\scwalgo}.

\begin{remark}
\label{rem:multineqal}
If $G$ has {\skex} $Q = Q_\ell \oplus_{L,R,S} Q_r$
then for any eigenvalue $\lambda$
$$
m_G (\lambda) \leq m_{Q_\ell}(\lambda) + m_{Q_r}(\lambda) + 4k
$$
\end{remark}
\begin{proof}
Note that $m_G (\lambda)$ is exactly the number of zero diagonal values produced by
executing {\scwalgo}$(Q, \lambda)$.
Assume the call to the left subtree $Q_{\ell}$
produces $m_\ell$ zeros,
the call to the right subtree $Q_r$
produces $m_r$ zeros,
and $m$ zeros are produced
from the matrix $M$ in (\ref{eq:boxofQ}).
Then we have
$m_G (\lambda) = m_\ell + m_r + m$.
Since  $m_\ell \leq m_{Q_\ell}(\lambda)$ and $m_r \leq m_{Q_r}(\lambda)$ we have
$$
m_G (\lambda) \leq m_{Q_\ell}(\lambda) + m_{Q_r}(\lambda) + m
$$
The result follows since $m$ is precisely the nullity of $M$ whose size
is at most $4k \times 4k$.
\end{proof}

It follows that if
$m_G (\lambda) > 4k$
then either $m_{Q_\ell}(\lambda) >0$ or $m_{Q_r}(\lambda) >0$.
Another immediate consequence of Remark~\ref{rem:multineqal} is the following.
\begin{remark}
\label{rem:multint}
If $I$ is an interval in which neither $Q_\ell$ or $Q_r$ contain eigenvalues
and $G$ has {\skex} $Q = Q_\ell \oplus_{L,R,S} Q_r$,
then for each $\lambda \in I$ we have $m_G (\lambda) \leq 4k$.
\end{remark}

It is interesting to relate
$m_G(\lambda)$ to the operations in {\scwalgo}$(Q, \lambda)$.
During execution, only the operations in
Lemmas~\ref{lem:Mzero}~and~\ref{lem:k'}
can create diagonal elements.
A careful look at Lemma~\ref{lem:Mzero} reveals that
if $M^{(0)} \not= 0$
and has rank $r$,
then it creates $r$ {\em nonzero} diagonal elements.
When $M^{(0)} = 0$ and $k' > k''$,
the operations in Lemma~\ref{lem:k'}
produce $k' - k''$ zero diagonal elements.
If $z$ denote the number of zeros created in
Lemma~\ref{lem:k'} (in the entire algorithm),
since the algorithm may return a  $2 \times 2$
matrix at the end of its execution, we must have
$z \leq m_G(\lambda) \leq z + 2$.

We can also bound the {\em total} number of eigenvalues in an interval.

\begin{theorem}
\label{thr:generalint}
If $G$ has {\skex} $Q = Q_\ell \oplus_{L,R,S} Q_r$, where $Q_\ell$ and $Q_r$ generate
graphs having no eigenvalues in $(a,b)$, then $G$ has at most $8k$ eigenvalues in $(a,b)$.
\end{theorem}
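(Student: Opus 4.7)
The plan is to run the algorithm {\scwalgo} on $(Q,a)$ and $(Q,b)$, and to count the eigenvalues of $G$ in $(a,b)$ via Sylvester's law of inertia, reducing the count to a linear combination of inertias of the small matrices manipulated by the algorithm. For a real symmetric matrix $X$, let $n(X)$ and $z(X)$ denote its numbers of strictly negative and zero eigenvalues, and write $B_H^c := A(H) - cI$ for an induced subgraph $H$ of $G$. Sylvester's law gives that the number of eigenvalues of $H$ in $(a,b)$ equals $n(B_H^b) - n(B_H^a) - z(B_H^a)$, and by hypothesis this vanishes for $H \in \{G(Q_\ell), G(Q_r)\}$.

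When {\scwalgo}$(Q,c)$ runs, partition the diagonal entries produced into $D_\ell^c, D_r^c$ (from the left and right subtree passes) and $D_Q^c$ (from the root \texttt{CombineBoxes} and \texttt{DiagonalizeBox}). Because $G(Q_s)$ is an induced subgraph of $G$ (a key property of \skexs) and because during the $Q_s$ pass the algorithm only manipulates rows and columns of $G(Q_s)$, Sylvester applied to the principal submatrix on $V(G(Q_s))$ gives, for $s\in\{\ell,r\}$,
\[
n(B_{Q_s}^c) = n(D_s^c) + n(M_s^c), \qquad z(B_{Q_s}^c) = z(D_s^c) + z(M_s^c),
\]
where $M_s^c$ is the residual box of order $\leq 2k$ transmitted to $Q$. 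At the root, \texttt{CombineBoxes} starts from a matrix $M^c$ of order $\leq 4k$ formed by placing $M_\ell^c, M_r^c$ on the diagonal and filling in the block $F$ of edges determined by $S$; since $M^c$ is fully diagonalized by congruence into $D_Q^c$, we have $n(D_Q^c)=n(M^c)$ and $z(D_Q^c)=z(M^c)$.

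Combining these identities with the fact that the inertia of $B_G^c$ equals that of $D_\ell^c \cup D_r^c \cup D_Q^c$, and substituting into $n(B_G^b) - n(B_G^a) - z(B_G^a)$, the hypothesis on $G(Q_\ell)$ and $G(Q_r)$ cancels their contributions, leaving
\[
e(G;a,b) = \bigl[n(M^b) - n(M^a) - z(M^a)\bigr] - \sum_{s\in\{\ell,r\}} \bigl[n(M_s^b) - n(M_s^a) - z(M_s^a)\bigr],
\]
where $e(G;a,b)$ is the number of eigenvalues of $G$ in $(a,b)$. The first bracket is at most $n(M^b) \leq 4k$ (since $M^b$ has order $\leq 4k$ and $n(M^a)+z(M^a)\geq 0$), and each of the other two brackets is at least $-2k$ (since $n(M_s^b)\geq 0$ and $n(M_s^a) + z(M_s^a) \leq 2k$), so its negation contributes at most $2k$. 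Adding yields the desired bound $4k + 2k + 2k = 8k$.

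The main obstacle is the inertia bookkeeping in the second paragraph: one must justify that the principal submatrix of the algorithm's working matrix on $V(G(Q_s))$ is truly congruent to $B_{Q_s}^c$ at the end of the $Q_s$ pass. This rests on $G(Q_s)$ being an induced subgraph of $G$ (the signature property of \scw, and one of the reasons this framework is preferred here over ordinary clique-width) together with Lemma~\ref{lem:description}, which makes the locality of the subtree processing explicit. Once this is in place, the bound follows from elementary inertia inequalities on matrices of bounded order.
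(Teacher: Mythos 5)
Your proposal is correct and follows essentially the same route as the paper's proof: run the algorithm at $a$ and $b$, split the output into left-subtree, right-subtree and root contributions, use Sylvester plus the hypothesis on $G(Q_\ell)$ and $G(Q_r)$ to cancel the subtree terms up to $2k$ each (the residual boxes), and bound the root contribution by $4k$. The only difference is cosmetic bookkeeping (you count negative/zero entries of $A-bI$ and $A-aI$, the paper counts positive/nonnegative ones, and you phrase the cancellation as an exact inertia identity rather than a chain of inequalities), resting on the same facts, namely $\submat{Q}_s \cong B_{Q_s}$ and the box sizes $\leq 2k$, $\leq 4k$.
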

\begin{proof}
Let
$(n_+^{a}, n_0^{a}, n_-^{a})$
be the number of positive, zero and negative diagonal elements produced by
{\scwalgo}$(Q, a)$, and let
$(n_+^{b}, n_0^{b}, n_-^{b})$
be the number of positive, zero and negative diagonal elements produced by
{\scwalgo}$(Q, b)$.
Then, using Sylvester's Law of Inertia, the number of eigenvalues of $G$ in
$(a,b)$ is
$$
n_+^{a} - n_{ge}^b
$$
where
$n_{ge}^b = n_0^{b} + n_+^{b}$.

During the execution of
{\scwalgo}$(Q, a)$, let
${\ell}_+^{a}$ be the
number of positive diagonal values produced by the call
{\scwalgo}$(Q_\ell , a)$,
and let
${r}_+^{a}$ be the number of positive diagonal values
produced by the call
{\scwalgo}$(Q_r , a)$.
Then
\begin{equation}
\label{eq2Lem8k}
n_+^{a} = {\ell}_+^{a} + {r}_+^{a} + n_1
\end{equation}
where $n_1 \leq 4k$,
as the remaining values will come from a
matrix of order at most $4k \times 4k$.

Similarly during
{\scwalgo}$(Q, b)$,
let
$\ell_{ge}^b$
denote the number of nonnegative diagonal values
produced during the left subtree call and let
$r_{ge}^b$ denote
the number of nonnegative diagonal values
produced during the right subtree call.
Then
\begin{equation}
\label{eq3Lem8k}
n_{ge}^{b} = \ell_{ge}^b + r_{ge}^b + n_2
\end{equation}
where $n_2 \leq 4k$.

In the graph defined by $Q_\ell$,
let $n_{\ell,+}^a$ denote the number of eigenvalues greater than $a$.
Since the
$\ell_{+}^{a}$
positive diagonal elements produced
in the left subtree call are valid diagonal elements
in the diagonalization of $Q_\ell$,
and there can be at most $2k$ more,
\begin{equation}
\label{leftpos}
\ell_{+}^{a} \leq n_{\ell,+}^a \leq \ell_{+}^{a} + 2k
\end{equation}
Similarly, letting $n_{\ell,ge}^b$ denote the number of
eigenvalues in $Q_\ell$
greater than or equal to $b$,
we have
\begin{equation}
\label{leftgezero}
\ell_{ge}^b \leq n_{\ell,ge}^b \leq \ell_{ge}^b + 2k
\end{equation}
As $Q_\ell$
has no eigenvalues in $(a,b)$,
we must have $n_{\ell,+}^a = n_{\ell,ge}^b$.
From
(\ref{leftpos}) and (\ref{leftgezero})
we have
\begin{equation}
\label{absdiffleft}
| \ell_{+}^{a} - \ell_{ge}^b| \leq 2k .
\end{equation}
Since $Q_r$ also has no eigenvalues in $(a,b)$,
a similar argument shows
\begin{equation}
\label{absdiffright}
| r_{+}^{a} - r_{ge}^b| \leq 2k .
\end{equation}
Using
(\ref{eq2Lem8k}) and (\ref{eq3Lem8k}) we have
\begin{eqnarray*}
n_+^{a} - n_{ge}^b & = & ({\ell}_+^{a} + {r}_+^{a} + n_1) - (\ell_{ge}^b + r_{ge}^b + n_2) \\
    & = & ({\ell}_+^{a} - \ell_{ge}^b)  + ( {r}_+^{a}  - r_{ge}^b )  + (n_1 - n_2)
\end{eqnarray*}
Using (\ref{absdiffleft}), (\ref{absdiffright})
and the fact that $|n_1 - n_2| \leq 4k$ we conclude that
$n_+^{a} - n_{ge}^b \leq 8k$, completing the proof.
\end{proof}

\begin{figure}[h]
\begin{tikzpicture}
    [scale=1,auto=left,every node/.style={circle,minimum width={.10in}, scale=0.7}]
       \node at (0,.5) [circle,draw, fill=red ] (a1) {} ;
         \node[draw,circle,fill=red   ]  (a2) at ([shift={(120: 1)}]a1) {};
         \node[draw,circle,fill=black ]  (a3) at ([shift={(180: 1)}]a2) {};
         \node[draw,circle,fill=black ]  (a4) at ([shift={(240: 1)}]a3) {};
         \node[draw,circle,fill=black ]  (a5) at ([shift={(-60: 1)}]a4) {};
         \node[draw,circle,fill=red   ]  (a6) at ([shift={(-120: 1)}]a1) {};
       \node at (3,0) [circle,draw, fill=red] (b1) {} ;
       \node[draw,circle,fill=red ]  (b2) at ([shift={(90: 1)}]b1) {};
       \node[draw,circle,fill=black ]  (b3) at ([shift={(18: 1)}]b2) {};
       \node[draw,circle,fill=black ]  (b4) at ([shift={(-54:1)}]b3) {};
        \node[draw,circle,fill=black ]  (b5) at ([shift={(-18: 1)}]b1) {};
     \path
            (a1) edge [thick](a3)
            (a1) edge [thick](a4)
            (a1) edge [thick](a5)
            (a2) edge [thick](a3)
            (a2) edge [thick](a4)
            (a2) edge [thick](a5)
            (a6) edge [thick](a3)
            (a6) edge [thick](a4)
            (a6) edge [thick](a5)
           (b1) edge [thick](b2)
           (b2) edge [thick](b3)
           (b3) edge [thick](b4)
           (b4) edge [thick](b1)
           (b2) edge [thick](b4)
           (b1) edge [thick](b3)
           (b5) edge [thick](b1)
           (b5) edge [thick](b2)
           (b5) edge [thick](b3)
           (b5) edge [thick](b4)
            (a1) edge [thick](b1)
            (a1) edge [thick](b2)
            (a2) edge [thick](b1)
            (a2) edge [thick](b2)
            (a6) edge [thick](b1)
            (a6) edge [thick](b2);
\end{tikzpicture}
\caption{\label{fig-cographjoin} Slick join of two cographs.}
\end{figure}
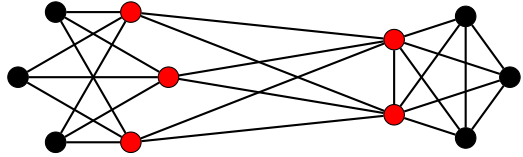

Consider the following construction.
\begin{example}
\label{ex:twocographs}
Let $G_\ell = (V_\ell, E_\ell)$ and $G_r = (V_r E_r)$ be arbitrary cographs,
and let
$W_\ell \subseteq V_\ell$ and
$W_r \subseteq V_r$ be arbitrary sets of vertices in
$G_\ell$ and $G_r$ respectively.
Let $G$ be the graph formed by taking
$G_\ell \cup G_r$ and adding edges
$W = W_\ell \times W_r$.
\end{example}

We claim that the graph $G$
in Construction~\ref{ex:twocographs}
can be defined with a slick $2$-expression
$Q_\ell \oplus_{L,R,S} Q_r$ where $Q_\ell$ and $Q_r$ generate cographs.
Indeed, since any cograph can be constructed with a slick $1$-expression,
there must exist a slick $2$-expression $Q_\ell$ that constructs $G_\ell$, assigning
label $2$ exactly to vertices in $W_\ell$.
Similarly, there exists a slick $2$-expression
$Q_r$ that constructs $G_r$, assigning
label $2$ exactly to vertices in $W_r$.
The graph $G$ can be formed with
$Q_\ell \oplus_{L,R,S} Q_r$ where $S = \{(2,2)\}$.
Since cographs have no eigenvalues in $(-1,0)$ (see \cite{MohammadianT2016}),
by Remark~\ref{rem:multint} and Theorem~\ref{thr:generalint}
it follows that $m_G (\lambda) \leq 4k = 8$
for each $\lambda \in (-1,0)$
and that $G$ has at most $8k = 16$ eigenvalues in the interval $(-1,0)$.
Observe that we may want to place only certain edges in $W$ between
$G_\ell$ and $ G_r$ but this will require additional labels, thus increasing $k$.

Note that Construction~\ref{ex:twocographs}
yields another cograph if $\min\{|W_\ell |, |W_r|\}=0$ or if
$W_\ell = V_\ell$ and $W_r = V_r$ since the disjoint union and the join of two cographs are cographs.
Figure~\ref{fig-cographjoin} illustrates a graph constructed in this way,
and for simplicity the cographs are small.
The left and right cographs are isomorphic to $K_{3,3}$ and $K_5$ respectively.
The vertices in $W_\ell \cup W_r$ are depicted in red.
It is easy to see this graph is not a cograph since it has an induced $P_4$
(for example take the bottom four vertices).

Construction~\ref{ex:twocographs}
seems interesting because the bounds of $8$ and $16$ are independent of both $n$ and $|W|$.
Indeed,
we can place {\em arbitrarily} many edges between
{\em arbitrary} cographs and yet the number of eigenvalues in $(-1,0)$
is bounded by a constant.
Among the graphs $G$ constructed in this way, how close are these bounds?
That is, for $\lambda \in (-1,0)$,
how large can $m_G (\lambda)$ be, and how many
eigenvalues can $G$ have in $(-1,0)$?

To underscore the somewhat unexpected nature of a constant bound when adding arbitrarily many edges, consider taking
an arbitrary graph $H$ having no eigenvalues in $(a,b)$,
and adding $t$ edges to it. Using the interlacing result from~\cite[Thr. 3.9]{Hall2009}, one may show that the number of eigenvalues in $(a,b)$
of the resulting graph $G$ is bounded by $2t$. The following construction shows that this bound is tight.
\begin{example}
\label{ex:2tistight}
Consider the graph $H$ on $4t$ vertices having $2t$ components, each a $K_2$,
whose spectrum is $-1^{2t}; 1^{2t}$.
Form $G$ by adding $t$ edges so that $G$ has $t$ components, each a $P_4$.
Then $G$ has spectrum $-1.618^t; -.618^t; .618^t; 1.618^t$.
In particular, $H$ has no eigenvalues in $(-1,1)$, while $G$ has $2t$.
\end{example}

\section{Concluding Remarks}
\label{sec:conclusion}

Given a graph $G$ and a real number $c$, we have designed an algorithm to find a diagonal matrix congruent to $A-cI$, where $A$ is the adjacency matrix of $G$. This algorithm is very efficient if $G$  has clique-width bounded by a constant $k$. The reason for this is that the algorithm uses an encoding of the graph, called a \emph{slick expression}, which is closely related to the expression defining the graph by means of its clique-width. In fact, for constant $k$, we describe linear-time algorithms to translate one type of expression into the other. We applied this algorithm to the problem of locating graph eigenvalues, which is a basic problem in Spectral Graph Theory.

Several natural questions arise from this work. Most importantly, one may ask about ways of adapting this approach to locating eigenvalues of general graph matrices, i.e., of matrices $M=(m_{ij})$ of order $n$ such that there is a graph $G$ on the vertex set $\{1,2,\ldots,n\}$ with the property that $m_{ij} \neq 0$ if and only if $\{i,j\} \in E(G)$. It is easy to see that essentially the same algorithm would work for $M$ (and still run in time $O(k^3 n)$) if the following conditions hold:
\begin{itemize}

\item[(i)] All nonzero off-diagonal entries of $M$ have the same value.

\item[(ii)] The algorithm has (oracle) access to the diagonal entries of $M$, that is, it is allowed to query directly the diagonal entry of a node produced by the slick $k$-expression.

\end{itemize}
Indeed, condition (i) ensures that the crucial relationship between the small submatrix that is processed at each stage and the actual matrix being diagonalized is preserved (see equations~\eqref{eq:matrixBQ},~\eqref{eq:formofM} and ~\eqref{eq:bigmatrixBQ}), while condition (ii) allows us to initialize the matrices at each leaf node of the parse tree. For instance, this would allow us to locate the eigenvalues of the Laplacian and of the signless Laplacian matrices associated with a graph as long as, together with the slick $k$-expression, the algorithm were given the degree of each vertex $v$ when $i(v)$ appears in the expression. (Of course, it is possible to compute, in polynomial-time, the degree of each vertex of a graph given by a $k$-expression, but we do not know how to do this in time $O(\poly(k)n)$, though time $O(n+m)$ is possible.) A nice open question would be to adapt this approach to matrices whose off-diagonal values may be assigned more values. More generally, it would certainly be interesting to find extensions of this work to other width parameters, as well as to the computation of other related matrix tasks.

Another natural research direction would be to further investigate the notion of slick clique-width. Even though it was introduced here with the sole objective of simplifying the description of the algorithm, the slick-clique width may be interesting for its own sake. As mentioned in the introduction, the graphs with slick clique-width equal to 1 are precisely the cographs. In general, If $H$ is an induced subgraph of $G$, then $\scwop(H) \leq \scwop(G)$ (for $H$, use the $k$-expression of $G$, ignoring the generation of any vertices in $V(G)-V(H)$. Remove all unnecessary operations from the expression.) Of course, this does not happen for general subgraphs, as all complete graphs are cographs and therefore satisfy $\scwop(K_n)=1$. So, given $k$, it makes sense to ask for the set $\mathcal{F}_k$ of all graphs $G$ such that $\scwop(G)>k$, but $\scwop(H)\leq k$ for any proper induced subgraph $H$ of $G$. In other words, the elements of $\mathcal{F}_k$ are the minimal graphs that cannot be represented by a slick $k$-expression (minimality is with respect to the partial order given by induced subgraphs). It is common to characterize graph classes through their set of minimal forbidden substructures, and the theory of cographs tells us that $\mathcal{F}_1=\{P_4\}$. The situation for $k=2$ is already much more complicated: a simple argument shows that $\scwop(T) \leq  2$ for any tree $T$, and it is also possible to show that distance hereditary graphs have slick clique-width at most 2. However, for cycles $C_n$, we have
\begin{eqnarray*}
\scwop(C_n)=
\begin{cases}
1, & \textrm{ if } n \in \{3,4\};\\
2, & \textrm{ if } n \in \{5,6\};\\
3 & \textrm{ if } n \geq 7.
\end{cases}
\end{eqnarray*}
This already implies that the set $\mathcal{F}_2$ is infinite, as it contains $C_n$ for any $n \geq 7$.

Finally, we think that it is possible to derive theoretical results about the location of eigenvalues for special classes of graphs through the analysis of the behavior of
our algorithm on these graphs. Of course, this would require a better description of slick expressions that define graphs in such a class.

\bibliographystyle{abbrv}
\bibliography{EV-clique-width}

\end{document}